\newtheorem{theorem}{Theorem}[section]
\newtheorem{corollary}[theorem]{Corollary}
\theoremstyle{definition}
\newtheorem{example}[theorem]{Example}
\newtheorem{prop}[theorem]{Proposition}
\numberwithin{equation}{section}
\DeclareMathOperator{\circulant}{circ}
\newcommand{\blank}{$\;\;$}
\begin{document}
\title{Incidence and Laplacian matrices of wheel graphs and their inverses}
\author{Jerad Ipsen} 
\author{Sudipta Mallik} 
\affil{\small Department of Mathematics and Statistics, Northern Arizona University, 801 S. Osborne Dr.\\ PO Box: 5717, Flagstaff, AZ 86011, USA 

jli42@nau.edu, sudipta.mallik@nau.edu}

\maketitle
\begin{abstract}
It has been an open problem to find the Moore-Penrose inverses of the incidence, Laplacian, and  signless Laplacian matrices of families of graphs except trees and unicyclic graphs. Since the inverse formulas for an odd unicyclic graph and an even unicyclic graph are quite different, we consider wheel graphs as they are formed  from odd or even cycles. In this article we solve the open problem for wheel graphs. This work has an interesting connection to inverses of circulant matrices.

% The vertex-edge incidence matrix of a (connected) unicyclic graph $G$ is a square matrix which is invertible if and only if the cycle of $G$ is an odd cycle.  A combinatorial formula of the inverse of the incidence matrix of an odd unicyclic graph was known. A combinatorial formula of the Moore-Penrose inverse of the incidence matrix of an even unicyclic graph is presented solving an open problem.
\end{abstract}

\section{Introduction}
Let $G$ be a simple graph on $n$ vertices $1,2,\ldots,n$ and $m$ edges $e_1,e_2,\ldots,e_m$ with the adjacency matrix $A$ and the degree matrix $D$. The {\it Laplacian matrix} $L$ and {\it signless Laplacian matrix} $Q$ of $G$ are defined as $L=D-A$ and $Q=D+A$ respectively. The vertex-edge {\it incidence matrix} $M$ of $G$ is the $n\times m$ matrix whose $(i,j)$-entry is $1$ if vertex $i$ is incident with edge $e_j$ and $0$ otherwise. It is well-known that $Q=MM^T$. An {\it oriented incidence matrix} $N$ of $G$ is the $n\times m$ matrix obtained from $M$ by changing one of the two 1s in each column of $M$ to $-1$. It is well-known that $Q=NN^T$ for any oriented incidence matrix $N$ of $G$.  \\

Circulant matrices play a crucial role in this article. A {\it circulant matrix} of order $n$ is an $n\times n$ matrix of the form
\[\left[\begin{array}{ccccc}
c_0 & c_1 & c_2 & \cdots & c_{n-1}\\
c_{n-1} & c_1 & c_2 & \cdots & c_{n-2}\\
c_{n-2} & c_{n-1} & c_1 & \cdots & c_{n-3}\\
\vdots & \vdots & \vdots & \ddots & \vdots\\
c_1 & c_2 & c_3 & \cdots & c_0
\end{array}\right]\]
which is denoted by $\circulant(c_0,c_1,\ldots,c_{n-1})$. For example, the incidence matrix of a cycle can be written as $\circulant(1,0,\ldots,0,1)$. The following are well-known properties of circulant matrices.

\begin{prop}\cite{{Searle1}}\label{circulant prop}\blank
\begin{enumerate}
    \item[(a)] Circulant matrices commute under multiplication.
    \item[(b)] The inverse of an invertible circulant matrix is a circulant matrix.
    \item[(c)] The inverse of  an invertible symmetric circulant matrix is a symmetric circulant matrix.
    \item[(d)] If $s$ is the row sum of  an invertible circulant matrix $C$, then $\frac{1}{s}$ is the row sum of $C^{-1}$.\\
\end{enumerate}
\end{prop}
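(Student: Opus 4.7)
The plan is to reduce everything to the single observation that every circulant is a polynomial in the cyclic shift matrix $P = \circulant(0,1,0,\ldots,0)$. Writing $C = \circulant(c_0,\ldots,c_{n-1}) = \sum_{k=0}^{n-1} c_k P^k = p_C(P)$, parts (a)--(c) all become statements about the commutative algebra generated by $P$. Part (a) is then immediate, since polynomials in one matrix commute.

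For part (b), I would diagonalize via the Fourier matrix $F$ with $F_{jk} = \frac{1}{\sqrt{n}}\omega^{jk}$ and $\omega = e^{2\pi i/n}$. One checks that $F^{-1}PF = \diag(1,\omega,\omega^2,\ldots,\omega^{n-1})$, so that $F^{-1}CF = \diag(p_C(1),p_C(\omega),\ldots,p_C(\omega^{n-1}))$ for every circulant $C$. Because the map $X \mapsto F^{-1}XF$ sends the $n$-dimensional space of circulants into the $n$-dimensional space of diagonal matrices, the correspondence is a linear bijection: the circulants are exactly the matrices diagonalized by $F$. When $C$ is invertible, each $p_C(\omega^j)$ is nonzero and $F^{-1}C^{-1}F$ remains diagonal, so $C^{-1}$ is circulant.

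Parts (c) and (d) then follow in a line each. For (c), if $C$ is symmetric and circulant, $C^{-1}$ is circulant by (b) and $(C^{-1})^T = (C^T)^{-1} = C^{-1}$ gives symmetry. For (d), note that the all-ones vector $\mathbf{1}$ is an eigenvector of every circulant, with eigenvalue equal to the common row sum; invertibility forces $s \neq 0$, and applying $C^{-1}$ to $C\mathbf{1} = s\mathbf{1}$ yields $C^{-1}\mathbf{1} = \frac{1}{s}\mathbf{1}$, so every row of $C^{-1}$ sums to $\frac{1}{s}$.

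The only genuine obstacle is (b). Between the DFT-diagonalization above and a more abstract Bezout/Cayley--Hamilton argument inside $\F[P] \cong \F[x]/(x^n-1)$, I would choose diagonalization, both because it is elementary and because it produces explicit closed-form eigenvalues $\{p_C(\omega^j)\}$ that are likely to resurface when the paper later writes down concrete inverse formulas for the circulant blocks arising from wheel graphs.
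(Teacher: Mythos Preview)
Your argument is correct. Each of the four parts is handled cleanly: the polynomial representation $C = p_C(P)$ gives (a) at once; the Fourier diagonalization $F^{-1}CF = \diag(p_C(\omega^j))$ together with the dimension count is a standard and complete proof of (b); and (c), (d) are the obvious one-line consequences you describe.

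There is, however, nothing to compare against: the paper does not prove this proposition. It is stated as a known background fact with a citation to Searle \cite{Searle1} and is used without justification in the later theorems. So your write-up goes beyond what the paper itself supplies. Your instinct that the explicit eigenvalues $p_C(\omega^j)$ will resurface is only half right: the later closed-form inverse of $\circulant(3,\pm 1,0,\ldots,0,\pm 1)$ is taken from a different Searle formula (Theorem~\ref{Searle1}) based on the roots of a quadratic, not on the DFT eigenvalues directly, though of course the two are equivalent.
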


The {\it Moore–Penrose inverse} of an $m\times n$ real matrix $A$, denoted by $A^+$, is the $n\times m$ real matrix that satisfies the following equations \cite{BG}:
$$AA^+A=A, A^+AA^+=A^+, (AA^+)^T=AA^+, (A^+A)^T=A^+A.$$
When $A$ is invertible, $A^+=A^{-1}$.\\
In 1965,  Ijira first studied the Moore-Penrose inverse of the oriented incidence matrix of a graph in \cite{I}. The same was done by Bapat for the Laplacian and edge-Laplacian of trees \cite{B}. Further research studied the same topic for different graphs such as distance regular graphs \cite{AB,ABE}. With the emergence of research on the signless Laplacian of graphs \cite{CRC, HM},  Hessert and Mallik studied the Moore-Penrose inverses of the incidence matrix and  signless Laplacian of a tree and an unicyclic graph in \cite{Hessert1, Hessert2}. It has been an open problem to find the Moore-Penrose inverses of the incidence, Laplacian, and  signless Laplacian matrices of other families of graphs. Note that the inverse formulas for an odd unicyclic graph and an even unicyclic graph are quite different \cite{Hessert2}. Since wheel graphs are formed  from odd or even cycles, they deserve to be investigated first for the inverse formulas of associated matrices. Recently an inverse formula for the distance matrix of a wheel graph has been studied by Balaji et al. \cite{Balaji}. In section 2, we study the Moore–Penrose inverses of the incidence and signless Laplacian matrices of the wheel graph on $n$ vertices. In section 3, we investigate the Moore–Penrose inverses of the oriented incidence and  Laplacian matrices of the wheel graph on $n$ vertices.

\section{Incidence and signless Laplacian matrices}
The wheel graph on $n\geq 4$ vertices, denoted by $W_n$, is obtained from an isolated vertex $v$ and a cycle on $n-1$ vertices by joining each vertex of the cycle to $v$. In this section first we study the Moore–Penrose inverse of the incidence matrix of $W_n$.

\begin{theorem}\label{M^+}
Let $W_n$ be the wheel graph on $n$ vertices with the incidence matrix $M$ given by
\[M=\left[\begin{array}{c|c}
\bm{1}^T & \bm{0}^T \\
\hline
I_{n-1} & C
\end{array}\right],\]
where $C$ is the circulant matrix $\circulant(1,0,...,0,1)$ of order $n-1$. The Moore-Penrose inverse of $M$ is given by
\[M^+=\frac{1}{2(n-1)} \left[\begin{array}{r|c}
2\bm{1} & X \\
\hline
-\bm{1} & Y
\end{array}\right],\]
where $X=2(CC^T+I_{n-1})^{-1}\left[ (n-1)I_{n-1}-J_{n-1}\right]$ and $Y= J_{n-1}+C^TX$.
\end{theorem}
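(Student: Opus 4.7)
The plan is to verify the four Moore-Penrose defining equations directly, exploiting the block structure of $M$ together with three elementary facts about $C$: (i) $C\bm{1} = 2\bm{1}$ and $\bm{1}^T C = 2\bm{1}^T$, since every row and column of $C$ has exactly two ones; (ii) $CC^T + I$ is a symmetric circulant with $(CC^T + I)\bm{1} = 5\bm{1}$, and is positive definite (hence invertible); (iii) by Proposition \ref{circulant prop}, $(CC^T+I)^{-1}$ is again a symmetric circulant and therefore commutes with the symmetric circulant $(n-1)I - J$.

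First I would observe that $\bm{1}^T X = \bm{0}$: by (ii) and symmetry, $\bm{1}^T (CC^T+I)^{-1} = \tfrac{1}{5}\bm{1}^T$, so $\bm{1}^T X = \tfrac{2}{5}\bm{1}^T[(n-1)I - J] = \bm{0}$. Then I would compute $MM^+$ block-by-block to obtain
\[
MM^+ = \frac{1}{2(n-1)}\begin{bmatrix} 2(n-1) & \bm{1}^T X \\ 2\bm{1} - C\bm{1} & X + CY\end{bmatrix}.
\]
The off-diagonal blocks vanish by the observation above and (i). For the lower-right block, the key expansion is $X + CY = (I + CC^T)X + CJ$; substituting the definition of $X$ and using $CJ = 2J$ yields $2[(n-1)I - J] + 2J = 2(n-1)I$. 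Hence $MM^+ = I_n$, which automatically secures Moore-Penrose conditions (1), (2), and (3).

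The only remaining condition is $(M^+M)^T = M^+M$. I would compute
\[
M^+ M = \frac{1}{2(n-1)}\begin{bmatrix} 2J + X & XC \\ Y - J & YC\end{bmatrix}
\]
and check symmetry block-by-block. Symmetry of the $(1,1)$ block reduces to symmetry of $X$, which follows from (iii) because a product of commuting symmetric matrices is symmetric. Symmetry of the $(2,2)$ block then follows from $YC = JC + C^T X C = 2J + C^T X C$. For the off-diagonal blocks, the identity $Y - J = C^T X = C^T X^T = (XC)^T$ closes the argument.

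The main obstacle is arithmetic rather than conceptual: one must verify carefully that the scalar $5$ appearing through $(CC^T+I)\bm{1} = 5\bm{1}$ cancels cleanly in the first step, and that the terms $CJ$ and $2J$ combine correctly in the $(2,2)$-block of $MM^+$. Once Proposition \ref{circulant prop} is invoked to extract commutativity of circulants and symmetry of circulant inverses, the verification of condition (4) falls out with essentially no additional work, so the whole proof is really one careful block computation supported by the circulant structure.
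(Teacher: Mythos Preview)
Your proposal is correct and follows essentially the same route as the paper: compute $MM^+$ blockwise to obtain $I_n$ (using $\bm 1^TX=\bm 0$, $C\bm 1=2\bm 1$, and $(I+CC^T)X+CJ=2(n-1)I$), then verify symmetry of $M^+M$ by reducing to the symmetry of $X$ via the circulant structure. The only cosmetic differences are that the paper invokes strict diagonal dominance rather than positive definiteness for invertibility of $CC^T+I$, and it phrases the symmetry of $X$ as ``a product of two symmetric circulant matrices'' rather than ``a product of commuting symmetric matrices.''
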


\begin{proof}
First note that 
\[CC^T+I_{n-1}=\circulant(3,1,0,\ldots,0,1)\]
is strictly diagonally dominant and consequently invertible. Let
\[H=\frac{1}{2(n-1)} \left[\begin{array}{r|c}
2\bm{1} & X \\
\hline
-\bm{1} & Y
\end{array}\right],\]
where $X=2(CC^T+I_{n-1})^{-1}\left[ (n-1)I_{n-1}-J_{n-1}\right]$ and $Y= J_{n-1}+C^TX$.  We show that $H=M^+$.

\begin{align*}
MH &=\; \frac{1}{2(n-1)}\left[\begin{array}{c|c}
\bm{1}^T & \bm{0}^T \\
\hline
I_{n-1} & C
\end{array}\right]
 \left[\begin{array}{r|c}
2\bm{1} & X \\
\hline
-\bm{1} & Y
\end{array}\right]\\
&=\; \frac{1}{2(n-1)}\left[\begin{array}{c|c}
2\bm{1}^T\bm{1} & \bm{1}^{T}X \\
\hline
2I_{n-1}\bm{1}-C\bm{1} & I_{n-1}X+CY
\end{array}\right]\\
&=\; \frac{1}{2(n-1)}\left[\begin{array}{c|c}
2(n-1) & \bm{1}^{T}X \\
 \hline
2\bm{1}-2\bm{1} & X+CY
\end{array}\right]\\
&=\; \frac{1}{2(n-1)}\left[\begin{array}{c|c}
2(n-1) & \bm{1}^{T}X \\
 \hline
\bm{0} & X+CY
\end{array}\right] \tag{1}\label{eq1}
\end{align*}

Since the row sum of $CC^T+I_{n-1}=\circulant(3,1,0,\ldots,0,1)$ is $5$, $\bm{1}^{T}(CC^{T}+I_{n-1})^{-1}=\frac{1}{5}\bm{1}^T$ by Proposition \ref{circulant prop}. Then

\begin{eqnarray*}
\bm{1}^{T}X
&=& 2\bm{1}^{T}(CC^T+I_{n-1})^{-1}\left[ (n-1)I_{n-1}-J_{n-1}\right]\\
&=&  2\left(\frac{1}{5}\bm{1}^T\right)[(n-1)I_{n-1}-J_{n-1}]\\
&=&  \frac{2}{5}[(n-1)\bm{1}^{T}-(n-1)\bm{1}^{T}]\\
&=&  \bm{0}^T.
\end{eqnarray*}

Now we simplify $X+CY$ as follows.
\begin{eqnarray*}
X+CY &=& X+C(J_{n-1}+C^TX)\\
&=& X+CJ_{n-1}+CC^{T}X\\
&=& (I_{n-1}+CC^{T})X+CJ_{n-1}\\
&=& 2(CC^{T}+I_{n-1})(CC^{T}+I_{n-1})^{-1}[(n-1)I_{n-1}-J_{n-1}]+2J_{n-1}\\
&=& 2(n-1)I_{n-1}-2J_{n-1}+2J_{n-1}\\
&=& 2(n-1)I_{n-1}
\end{eqnarray*}

Putting $\bm{1}^{T}X = \bm{0}^T$ and $X+CY=2(n-1)I_{n-1}$ in (\ref{eq1}), we get
\[MH = \frac{1}{2(n-1)}\left[\begin{array}{c|c}
2(n-1) & \bm{0}^{T} \\
\hline
\bm{0} & 2(n-1)I_{n-1}
\end{array}\right]
=I_n.\]

Since $MH=I_n$,  we have $MHM=M$, $HMH=H$, and $(MH)^T=MH$. It remains to show that $HM$ is symmetric. 

\begin{align*}
    HM & =\; \frac{1}{2(n-1)} \left[\begin{array}{r|c}
2\bm{1} & X \\
\hline
-\bm{1} & Y
\end{array}\right] 
\left[\begin{array}{c|c}
\bm{1}^T & \bm{0} \\
\hline
I_{n-1} & C
\end{array}\right]\\
    &=\; \frac{1}{2(n-1)}\left[\begin{array}{c|c}
    2\bm{1}\bm{1}^T+X & XC \\
    \hline
    -\bm{1}\bm{1}^T+Y & YC
    \end{array}\right] \\
    & =\; \frac{1}{2(n-1)}\left[\begin{array}{c|c}
    2J_{n-1}+X & XC \\
    \hline
    -J_{n-1}+Y & YC
    \end{array}\right] \\
    & =\; \frac{1}{2(n-1)}\left[\begin{array}{c|c}
    2J_{n-1}+X & XC \\
    \hline
    C^TX & J_{n-1}C+C^TXC
    \end{array}\right]  \;(\text{since } Y= J_{n-1}+C^TX)\\
    & =\; \frac{1}{2(n-1)}\left[\begin{array}{c|c}
    2J_{n-1}+X & XC \\
    \hline
    C^TX & 2J_{n-1}+C^TXC
    \end{array}\right] \;(\text{since } J_{n-1}C= 2J_{n-1}) 
\end{align*}

To show $HM$ is symmetric, it suffices to show that $X$ is symmetric. Note that $CC^T+I_{n-1}$ is a symmetric circulant matrix and so is $(CC^T+I_{n-1})^{-1}$ by Proposition \ref{circulant prop}. Also $(n-1)I_{n-1}-J_{n-1}$ is a symmetric circulant matrix. Then so is 
\[X=2(CC^T+I_{n-1})^{-1}\left[ (n-1)I_{n-1}-J_{n-1}\right]\]
as a product of two symmetric circulant matrices.\\

Thus $H=M^+$.
\end{proof}

\begin{corollary}
In Theorem \ref{M^+}, $X$ is a symmetric circulant matrix and $Y$ is a circulant matrix.
\end{corollary}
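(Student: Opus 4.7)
The plan is to combine a few short closure observations about circulant matrices with what was already established inside the proof of Theorem \ref{M^+}. Recall that the theorem's proof already exhibited $X = 2(CC^T+I_{n-1})^{-1}[(n-1)I_{n-1}-J_{n-1}]$ as a product of two \emph{symmetric circulant} matrices, which gave the symmetry of $X$ as a byproduct. So the only genuinely new content in this corollary is the circulant property of $X$ and $Y$; the symmetry statement for $X$ will come for free.

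For the circulant property of $X$, I would first note that $C = \circulant(1,0,\ldots,0,1)$ is circulant, hence so is $C^T$ (the transpose of $\circulant(c_0,c_1,\ldots,c_{n-1})$ is $\circulant(c_0,c_{n-1},\ldots,c_1)$), and hence so is $CC^T+I_{n-1}$. By Proposition \ref{circulant prop}(b), the inverse $(CC^T+I_{n-1})^{-1}$ is circulant. The matrix $(n-1)I_{n-1}-J_{n-1}$ is visibly circulant as well. The set of circulant matrices of a fixed order is closed under multiplication — this is implicit in parts (a)--(c) of Proposition \ref{circulant prop} — so the product defining $X$ is circulant. Together with the symmetry already shown, this gives that $X$ is a symmetric circulant matrix.

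For $Y = J_{n-1} + C^T X$, I would observe that $C^T$ is circulant (as above), $X$ is circulant (just proved), and therefore $C^T X$ is circulant by the same closure-under-multiplication remark. Since $J_{n-1} = \circulant(1,1,\ldots,1)$ is circulant and circulant matrices are evidently closed under addition (entrywise, from the definition), $Y$ is circulant.

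The only subtle point is that Proposition \ref{circulant prop} does not literally state ``products of circulants are circulant'' — but this closure is implicit, since one cannot even ask whether circulants commute (part (a)) or whether an inverse circulant is circulant (part (b)) without it. So there is no real obstacle here; the corollary is essentially a bookkeeping consequence of the theorem's proof.
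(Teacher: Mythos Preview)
Your proposal is correct and follows essentially the same route as the paper. The paper gives no separate proof for this corollary: the fact that $X$ is a symmetric circulant matrix is already established inside the proof of Theorem~\ref{M^+} (as a product of two symmetric circulant matrices), and the circulant property of $Y = J_{n-1} + C^{T}X$ then follows from closure of circulants under sums and products, exactly as you argue.
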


\begin{example}
Consider $W_6$ with vertex and edge labeling given in Figure \ref{fig:W6} and its incidence matrix $M$. The Moore-Penrose inverse $M^+$ of $M$ is as follows.

\begin{figure}
\centering
\begin{tikzpicture}[shorten > = 1pt, auto, node distance = .5cm ]
\tikzset{vertex/.style = {shape = circle, draw, minimum size = 1em}}
\tikzset{edge/.style = {-}}
%vertices
\node[vertex] (1) at (0,0){$1$};
\node[vertex] (2) at (0,3){$2$};
\node[vertex] (3) at (3,1){$3$};
\node[vertex] (4) at (2,-2.5){$4$};
\node[vertex] (5) at (-2,-2.5){$5$};
\node[vertex] (6) at (-3,1){$6$};
%edges
\draw[edge] (1) edge node[right]{$e_1$} (2);
\draw[edge] (1) edge node[below]{$e_2$} (3);
\draw[edge] (1) edge node[left]{$e_3$} (4);
\draw[edge] (1) edge node[left]{$e_4$} (5);
\draw[edge] (1) edge node[above]{$e_5$} (6);
\draw[edge] (2) edge node[above]{$e_6$} (3);
\draw[edge] (3) edge node[right]{$e_7$} (4);
\draw[edge] (4) edge node[below]{$e_8$} (5);
\draw[edge] (5) edge node[left]{$e_9$} (6);
\draw[edge] (6) edge node[above]{$e_{10}$} (2);
\end{tikzpicture}
\caption{$W_6$, the wheel graph on $6$ vertices}
\label{fig:W6}
\end{figure}
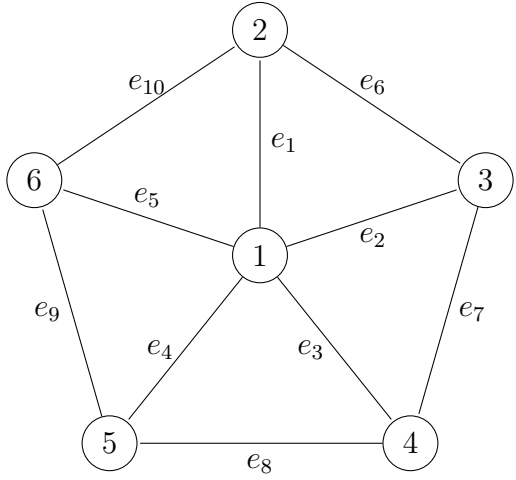

\[M=\left[\begin{array}{rrrrr|rrrrr}
1 & 1 & 1 & 1 & 1 & 0 & 0 & 0 & 0 & 0 \\
\hline
1 & 0 & 0 & 0 & 0 & 1 & 0 & 0 & 0 & 1 \\
0 & 1 & 0 & 0 & 0 & 1 & 1 & 0 & 0 & 0 \\
0 & 0 & 1 & 0 & 0 & 0 & 1 & 1 & 0 & 0 \\
0 & 0 & 0 & 1 & 0 & 0 & 0 & 1 & 1 & 0 \\
0 & 0 & 0 & 0 & 1 & 0 & 0 & 0 & 1 & 1
\end{array}\right],\;
M^{+}=\frac{1}{10}\left[\begin{array}{r|rrrrr}
2 & 4 & -2 & 0 & 0 & -2 \\
2 & -2 & 4 & -2 & 0 & 0 \\
2 & 0 & -2 & 4 & -2 & 0 \\
2 & 0 & 0 & -2 & 4 & -2 \\
2 & -2 & 0 & 0 & -2 & 4 \\
\hline
-1 & 3 & 3 & -1 & 1 & -1 \\
-1 & -1 & 3 & 3 & -1 & 1 \\
-1 & 1 & -1 & 3 & 3 & -1 \\
-1 & -1 & 1 & -1 & 3 & 3 \\
-1 & 3 & -1 & 1 & -1 & 3
\end{array}\right].\]
\end{example}

Theorem \ref{M^+} does not provide an explicit formula for each entry of $M^+$. To do that, we use the following result.

\begin{theorem}\cite[Theorem 1]{Searle1}\label{Searle1}
Let $n>3$ be an integer and $a,b,c$ real numbers such that $a^2>4bc$ and $b\neq0$. 
Except when $a+b+c=0$, or $n$ is even and $a=b+c$, 
\[[\circulant(a,b,0,0,...,0,c)]^{-1}=\circulant(a_0,a_1,\ldots,a_{n-1}), \]

where
\[a_j = \frac{z_1z_2}{b(z_1-z_2)}\left(\frac{z_1^j}{1-z_1^n}-\frac{z_2^j}{1-z_2^n}\right)\]

for $z_1,z_2=\left(-a\pm\sqrt{a^2-4bc}\right)/2c$.
\end{theorem}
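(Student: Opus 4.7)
The plan is to diagonalize $C=\circulant(a,b,0,\ldots,0,c)$ via the discrete Fourier transform and then read off the entries of $C^{-1}$ using an inverse transform combined with partial fractions. Let $\omega=e^{2\pi i/n}$. Every circulant $\circulant(c_0,\ldots,c_{n-1})$ is diagonalized by the Fourier matrix with eigenvalues $\mu_k=\sum_{m=0}^{n-1}c_m\omega^{mk}$, and conversely its entries are recovered by the inverse DFT $c_j=\tfrac{1}{n}\sum_k\mu_k\omega^{-jk}$. For our matrix the eigenvalues are
\[\lambda_k \;=\; a+b\omega^k+c\omega^{-k} \;=\; \frac{cw^2+aw+b}{w}, \qquad w:=\omega^{-k},\]
and factoring the quadratic as $c(w-z_1)(w-z_2)$ with $z_1,z_2$ the roots of $cz^2+az+b=0$ reproduces the values $z_{1,2}=(-a\pm\sqrt{a^2-4bc})/(2c)$ appearing in the statement.

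Next I would verify invertibility. Since $a,b,c\in\R$ and $a^2>4bc$, the roots $z_1,z_2$ are real and distinct. The only real $n$-th roots of unity are $\omega^{-k}=1$ (for which $\lambda_k=a+b+c$) and, when $n$ is even, $\omega^{-k}=-1$ (for which $\lambda_k=a-b-c$); for every other $k$ the value $\omega^{-k}$ is genuinely complex and so cannot coincide with $z_1$ or $z_2$. The excluded cases $a+b+c=0$ and ($n$ even with $a=b+c$) are therefore exactly the cases where some $\lambda_k$ vanishes, and under the stated hypotheses every $\lambda_k\ne 0$. By Proposition~\ref{circulant prop}(b), $C^{-1}=\circulant(a_0,a_1,\ldots,a_{n-1})$ is itself circulant, so its entries are given by the inverse DFT $a_j=\tfrac{1}{n}\sum_{k=0}^{n-1}\lambda_k^{-1}\omega^{-jk}$.

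Now I apply the partial fraction decomposition
\[\lambda_k^{-1} \;=\; \frac{w}{c(w-z_1)(w-z_2)} \;=\; \frac{1}{c(z_1-z_2)}\!\left(\frac{z_1}{w-z_1}-\frac{z_2}{w-z_2}\right),\]
observe that $w^j=\omega^{-jk}$, and use that $w$ traverses all $n$-th roots of unity exactly once as $k$ does. The computation of $a_j$ thus reduces to evaluating $\sum_{w^n=1}w^j/(w-z)$ for $z=z_1,z_2$. This is a standard identity: writing $w^j/(w-z)=(w^j-z^j)/(w-z)+z^j/(w-z)$ and using $\sum_w w^m=n$ if $n\mid m$ and $0$ otherwise, together with $\sum_w 1/(w-z)=nz^{n-1}/(1-z^n)$ (obtained from the logarithmic derivative of $\prod_w(z-w)=z^n-1$), yields $\sum_w w^j/(w-z)=nz^{j-1}/(1-z^n)$ for $1\le j\le n-1$. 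Substituting back and using the Vieta relation $z_1z_2=b/c$ to rewrite $1/c$ as $z_1z_2/b$ produces exactly the claimed formula for $a_j$.

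The main obstacle is the $j=0$ edge case of the root-of-unity sum, where the direct value $\sum_w 1/(w-z)=nz^{n-1}/(1-z^n)$ differs in form from the pattern at $j\ge 1$; however, the algebraic identity $z^n/(1-z^n)=1/(1-z^n)-1$ shows that the two constant terms cancel after taking the $z_1,z_2$ difference, so the single closed form covers $j=0$ as well. Apart from this bookkeeping and the real/complex case analysis used to establish invertibility, the argument is a clean DFT diagonalization followed by a partial-fraction calculation.
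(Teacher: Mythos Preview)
The paper does not prove this theorem; it is quoted from Searle's paper \cite{Searle1} and used as a black box. So there is no ``paper's own proof'' to compare against. Your argument is the standard one and is correct: diagonalize the circulant by the DFT, identify the eigenvalues $\lambda_k=(cw^2+aw+b)/w$ with $w=\omega^{-k}$, check that under the stated exclusions no $\lambda_k$ vanishes, and recover the entries of the inverse by the inverse DFT and partial fractions. Your root-of-unity sums are right, including the $j=0$ bookkeeping. One small caveat worth making explicit: the formula $z_{1,2}=(-a\pm\sqrt{a^2-4bc})/(2c)$ (and hence your factoring $cw^2+aw+b=c(w-z_1)(w-z_2)$) presupposes $c\neq 0$; this hypothesis is implicit in the theorem as stated but is not listed alongside $b\neq 0$, so you may want to flag it.
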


\begin{corollary}
The inverse of the circulant matrix $\circulant(3,1,0,...,0,1)$ of order $n>3$ is given by
\[[\circulant(3,1,0,...,0,1)]^{-1}=\circulant(a_0,a_1,\ldots,a_{n-1}),\]
where 
\[a_j=\frac{2^{n-j}}{\sqrt{5}}\left[\frac{(-3+\sqrt{5})^j}{2^n-(-3+\sqrt{5})^n}-\frac{(-3-\sqrt{5})^j}{2^n-(-3-\sqrt{5})^n}\right].\]

\end{corollary}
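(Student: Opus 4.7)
The plan is to apply Theorem \ref{Searle1} directly with the parameter choice $a=3$, $b=1$, $c=1$ and simplify. There are no genuine obstacles here; the work is entirely bookkeeping, and the only thing to be careful about is algebraic simplification of the $z_1,z_2$ expressions so that the final form matches the statement.

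First I would verify the hypotheses of Theorem \ref{Searle1}. With $(a,b,c)=(3,1,1)$ we have $a^2 = 9 > 4 = 4bc$ and $b = 1 \neq 0$, and the excluded cases do not apply since $a+b+c = 5 \neq 0$ and $a = 3 \neq 2 = b+c$ (so the even-$n$ exception is avoided). Therefore the theorem applies for every integer $n > 3$ and yields
\[
[\circulant(3,1,0,\ldots,0,1)]^{-1} = \circulant(a_0,a_1,\ldots,a_{n-1})
\]
with $a_j$ given by the formula in Theorem \ref{Searle1}.

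Next I would compute the two roots. With $c = 1$ they are
\[
z_1, z_2 = \frac{-3 \pm \sqrt{5}}{2},
\]
and a direct calculation gives $z_1 z_2 = \frac{9-5}{4} = 1$ and $z_1 - z_2 = \sqrt{5}$. Hence the prefactor $\dfrac{z_1 z_2}{b(z_1-z_2)}$ simplifies to $\dfrac{1}{\sqrt{5}}$.

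Finally I would rewrite the bracketed expression so that each $z_k$ is pulled out of its denominator. Since
\[
\frac{z_1^j}{1 - z_1^n}
= \frac{(-3+\sqrt{5})^j/2^j}{\bigl(2^n - (-3+\sqrt{5})^n\bigr)/2^n}
= \frac{2^{n-j}(-3+\sqrt{5})^j}{2^n - (-3+\sqrt{5})^n},
\]
and an identical manipulation holds for the $z_2$ term, factoring out the common $2^{n-j}$ produces exactly
\[
a_j = \frac{2^{n-j}}{\sqrt{5}}\left[\frac{(-3+\sqrt{5})^j}{2^n-(-3+\sqrt{5})^n} - \frac{(-3-\sqrt{5})^j}{2^n-(-3-\sqrt{5})^n}\right],
\]
which is the claimed formula. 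The only place to exercise care is keeping track of the powers of $2$ when rationalising $z_k^j$ and $z_k^n$, but this is routine.
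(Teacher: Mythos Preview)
Your proposal is correct and follows essentially the same route as the paper: apply Theorem~\ref{Searle1} with $a=3$, $b=c=1$, compute $z_1,z_2=(-3\pm\sqrt{5})/2$, and simplify the resulting expression for $a_j$ by clearing the powers of $2$ from numerator and denominator. The only difference is that you explicitly check the hypotheses of Theorem~\ref{Searle1} (the diagonal-dominance condition $a^2>4bc$, $b\neq 0$, and the two excluded cases), which the paper omits but which is a welcome addition.
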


\begin{proof}
Here $a=3$ and $b=c=1$. By Theorem \ref{Searle1},

\[a_j = \frac{z_1z_2}{b(z_1-z_2)}\left(\frac{z_1^j}{1-z_1^n}-\frac{z_2^j}{1-z_2^n}\right)\]

where $z_1,z_2=(-3\pm \sqrt{3^2-4\cdot 1 \cdot 1})/(2\cdot 1)=(-3\pm\sqrt{5})/2$. Then 

\begin{eqnarray*}
    a_j &=& \frac{\left(\frac{-3+\sqrt{5}}{2}\right) \left(\frac{-3-\sqrt{5}}{2}\right)}{1\left(\frac{-3+\sqrt{5}}{2}-\frac{-3-\sqrt{5}}{2}\right)}
    \left[\frac{\frac{(-3+\sqrt{5})^j}{2^j}}{1-\frac{(-3+\sqrt{5})^n}{2^n}}-\frac{\frac{(-3-\sqrt{5})^j}{2^j}}{1-\frac{(-3-\sqrt{5})^n}{2^n}}\right] \\
    &=& \frac{\frac{9-5}{4}}{\frac{2\sqrt{5}}{2}}\left[\frac{(-3+\sqrt{5})^j}{2^j\left(\frac{2^n-(-3+\sqrt{5})^n}{2^n}\right)}-\frac{(-3-\sqrt{5})^j}{2^j\left(\frac{2^n-(-3-\sqrt{5})^n}{2^n}\right)}\right] \\
    &=& \frac{1}{\sqrt{5}}\left[\frac{2^{n-j}(-3+\sqrt{5})^j}{2^n-(-3+\sqrt{5})^n}-\frac{2^{n-j}(-3-\sqrt{5})^j}{2^n-(-3-\sqrt{5})^n}\right] \\
    &=& \frac{2^{n-j}}{\sqrt{5}}\left[\frac{(-3+\sqrt{5})^j}{2^n-(-3+\sqrt{5})^n}-\frac{(-3-\sqrt{5})^j}{2^n-(-3-\sqrt{5})^n}\right].
\end{eqnarray*}
\end{proof}

\begin{corollary}\label{X=circ}
$X$ in Theorem \ref{M^+} is given by
$X=\circulant(b_0,b_1,\ldots,b_{n-2})$ where 
\[b_j=-\frac{2}{5}+\frac{2^{n-j}(n-1)}{\sqrt{5}}\left[\frac{(-3+\sqrt{5})^j}{2^{n-1}-(-3+\sqrt{5})^{n-1}}-\frac{(-3-\sqrt{5})^j}{2^{n-1}-(-3-\sqrt{5})^{n-1}}\right].\]
\end{corollary}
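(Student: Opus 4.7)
The plan is to combine the explicit inverse formula from the preceding corollary with the row-sum property of circulant inverses from Proposition \ref{circulant prop}(d). Recall that $X=2(CC^T+I_{n-1})^{-1}[(n-1)I_{n-1}-J_{n-1}]$ and $CC^T+I_{n-1}=\circulant(3,1,0,\ldots,0,1)$ has order $n-1$, so applying the preceding corollary with $n$ replaced by $n-1$ immediately gives $(CC^T+I_{n-1})^{-1}=\circulant(a_0,a_1,\ldots,a_{n-2})$ with
\[a_j=\frac{2^{n-1-j}}{\sqrt{5}}\left[\frac{(-3+\sqrt{5})^j}{2^{n-1}-(-3+\sqrt{5})^{n-1}}-\frac{(-3-\sqrt{5})^j}{2^{n-1}-(-3-\sqrt{5})^{n-1}}\right].\]

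Next I would split the product defining $X$ by distributivity:
\[X=2(n-1)(CC^T+I_{n-1})^{-1}-2(CC^T+I_{n-1})^{-1}J_{n-1}.\]
The first term is a scalar multiple of a circulant. For the second term, since the row sum of $\circulant(3,1,0,\ldots,0,1)$ is $5$, Proposition \ref{circulant prop}(d) forces the row sum of $(CC^T+I_{n-1})^{-1}$ to be $\tfrac{1}{5}$, so $(CC^T+I_{n-1})^{-1}J_{n-1}=\tfrac{1}{5}J_{n-1}$. Hence
\[X=2(n-1)\,\circulant(a_0,a_1,\ldots,a_{n-2})-\tfrac{2}{5}J_{n-1},\]
which is a circulant matrix whose first row, after subtracting the constant $\tfrac{2}{5}$ from each entry of the scaled inverse, has entries $b_j=2(n-1)a_j-\tfrac{2}{5}$.

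Finally, I would substitute the formula for $a_j$ into $b_j=2(n-1)a_j-\tfrac{2}{5}$; the factor $2(n-1)\cdot 2^{n-1-j}=2^{n-j}(n-1)$ produces the claimed expression. There is no real obstacle here: the only subtlety is keeping the index bookkeeping straight (the corollary is stated for circulant matrices of order $n$, but we are applying it at order $n-1$, so every $n$ in its statement becomes $n-1$ in ours), and the rest is direct substitution.
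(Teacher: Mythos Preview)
Your proposal is correct and follows essentially the same route as the paper: split $X$ into $2(n-1)(CC^T+I_{n-1})^{-1}$ and $-2(CC^T+I_{n-1})^{-1}J_{n-1}$, handle the $J$ term via the row-sum property of Proposition~\ref{circulant prop}(d), and then substitute the preceding corollary's formula (with $n$ replaced by $n-1$) for the entries of the circulant inverse. The only cosmetic difference is the order of presentation; the paper records the $J$ simplification first and the inverse formula second, but the content is identical.
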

\begin{proof} 
Recall  $CC^T+I_{n-1}=\circulant(3,1,0,\ldots,0,1)$. Since the row sum of $CC^T+I_{n-1}$ is $5$, $(CC^{T}+I_{n-1})^{-1}J_{n-1}=\frac{1}{5}J_{n-1}$ by Proposition \ref{circulant prop}. Then

\begin{eqnarray*}
    X &=& 2(CC^T+I_{n-1})^{-1}[(n-1)I_{n-1}-J_{n-1}] \\
    &=& 2[\circulant(3,1,0,\ldots,0,1)]^{-1}[(n-1)I_{n-1}-J_{n-1}] \\
    &=& 2(n-1)[\circulant(3,1,0,\ldots,0,1)]^{-1}-2[\circulant(3,1,0,\ldots,0,1)]^{-1}J_{n-1} \\
    &=& 2(n-1)[\circulant(3,1,0,\ldots,0,1)]^{-1}-2\left(\frac{1}{5} J_{n-1}\right) \\
    &=& -\frac{2}{5}J_{n-1}+2(n-1)[\circulant(3,1,0,\ldots,0,1)]^{-1}.
\end{eqnarray*}

By the preceding corollary, $X=\circulant (b_0,b_1,...,b_{n-2})$ where 

\begin{eqnarray*}
    b_j &=& -\frac{2}{5}+2(n-1)\frac{2^{n-1-j}}{\sqrt{5}}\left[\frac{(-3+\sqrt{5})^j}{2^{n-1}-(-3+\sqrt{5})^{n-1}}-\frac{(-3-\sqrt{5})^j}{2^{n-1}-(-3-\sqrt{5})^{n-1}}\right] \\
    &=& -\frac{2}{5}+\frac{2^{n-j}(n-1)}{\sqrt{5}}\left[\frac{(-3+\sqrt{5})^j}{2^{n-1}-(-3+\sqrt{5})^{n-1}}-\frac{(-3-\sqrt{5})^j}{2^{n-1}-(-3-\sqrt{5})^{n-1}}\right].
\end{eqnarray*}
\end{proof}

\begin{corollary}
$Y$ in Theorem \ref{M^+} is given by
$Y=\circulant(d_0,d_1,\ldots,d_{n-2})$ where
\[d_0=\frac{1}{5}+\frac{4(n-1)}{\sqrt{5}}\left[\frac{2^{n-2}+(-3+\sqrt{5})^{n-2}}{2^{n-1}-(-3+\sqrt{5})^{n-1}}
    -\frac{2^{n-2}+(-3-\sqrt{5})^{n-2}}{2^{n-1}-(-3-\sqrt{5})^{n-1}}\right]\]
and for $j=1,2,\ldots,n-2$,
\[d_j=\frac{1}{5}+\frac{2^{n+1-j}(n-1)}{5+\sqrt{5}}\left[\frac{2(-3+\sqrt{5})^{j-1}}{2^{n-1}-(-3+\sqrt{5})^{n-1}}
-\frac{(-3-\sqrt{5})^{j}}{2^{n-1}-(-3-\sqrt{5})^{n-1}}\right].\]
\end{corollary}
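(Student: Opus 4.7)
The strategy is to compute $Y$ directly from the identity $Y=J_{n-1}+C^TX$ used to define it in Theorem~\ref{M^+}, using the closed form for $X$ given in Corollary~\ref{X=circ}. First I would observe that $C^T=\circulant(1,1,0,\ldots,0)$ has row sum $2$, so $C^TJ_{n-1}=2J_{n-1}$. Writing $A=CC^T+I_{n-1}=\circulant(3,1,0,\ldots,0,1)$ and using $X=-\tfrac{2}{5}J_{n-1}+2(n-1)A^{-1}$ from the proof of Corollary~\ref{X=circ}, this gives
\[
Y \;=\; J_{n-1}-\tfrac{2}{5}\,C^TJ_{n-1}+2(n-1)\,C^TA^{-1} \;=\; \tfrac{1}{5}J_{n-1}+2(n-1)\,C^TA^{-1}.
\]
Since $C^T$ and $A^{-1}$ are both circulants, so is $C^TA^{-1}$, hence $Y=\circulant(d_0,\ldots,d_{n-2})$.

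Next I would expand the circulant product. If $A^{-1}=\circulant(a_0,a_1,\ldots,a_{n-2})$ as given by the preceding corollary (with $n$ replaced by $n-1$), then convolution of the first row of $C^T=\circulant(1,1,0,\ldots,0)$ with $A^{-1}$ yields first-row entries $a_j+a_{(j-1)\bmod(n-1)}$. Therefore
\[
d_0=\tfrac{1}{5}+2(n-1)(a_0+a_{n-2}), \qquad d_j=\tfrac{1}{5}+2(n-1)(a_j+a_{j-1})\ \text{for } j=1,\ldots,n-2.
\]
Substituting the formula for $a_j$ from the preceding corollary into $d_0$ and combining the two fractions inside the brackets (after factoring out a $2$ from each numerator) yields the stated expression for $d_0$ directly.

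For $j\ge 1$, I would simplify $a_j+a_{j-1}$ by factoring $(-3\pm\sqrt{5})^{j-1}$ out of each summand. The key identities are
\[
2^{n-1-j}(-3+\sqrt 5)^j+2^{n-j}(-3+\sqrt 5)^{j-1}=2^{n-1-j}(\sqrt 5-1)(-3+\sqrt 5)^{j-1},
\]
and the analogous identity with signs flipped giving $-2^{n-1-j}(\sqrt 5+1)(-3-\sqrt 5)^{j-1}$, since $(-3\pm\sqrt 5)+2=-1\pm\sqrt 5$. This produces a natural symmetric form with $(\sqrt 5-1)(-3+\sqrt 5)^{j-1}$ and $(\sqrt 5+1)(-3-\sqrt 5)^{j-1}$. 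To match the asymmetric form in the statement, one uses $(\sqrt 5-1)(-3-\sqrt 5)=-2(\sqrt 5+1)$ to rewrite $(\sqrt 5+1)(-3-\sqrt 5)^{j-1}=-\tfrac{1}{2}(\sqrt 5-1)(-3-\sqrt 5)^{j}$, and rationalizes $\tfrac{1}{5+\sqrt 5}=\tfrac{\sqrt 5-1}{4\sqrt 5}$ to pull the common factor out front. The resulting coefficient matches $\tfrac{2^{n+1-j}(n-1)}{5+\sqrt 5}$, and the two bracketed terms match the stated formula.

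\paragraph{Main obstacle.}
The plan is essentially mechanical once the circulant-convolution reduction is in place; the only real subtlety is the final algebraic step for $j\ge 1$, namely absorbing the $(\sqrt 5-1)$ and $(\sqrt 5+1)$ factors into a single prefactor $\tfrac{2^{n+1-j}(n-1)}{5+\sqrt 5}$ while shifting the exponent on $(-3-\sqrt 5)$ from $j-1$ up to $j$. This is where careful bookkeeping with the identity $(\sqrt 5-1)(-3-\sqrt 5)=-2(\sqrt 5+1)$ is essential, and explains the slightly odd asymmetric shape of the stated formula for $d_j$.
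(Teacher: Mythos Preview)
Your proposal is correct and follows essentially the same route as the paper. The paper works with $d_j=1+b_j+b_{j-1}$ (with $b_{-1}=b_{n-2}$) coming from $Y=J_{n-1}+C^TX$, whereas you first split off the $J_{n-1}$ contribution from $X$ to get $d_j=\tfrac{1}{5}+2(n-1)(a_j+a_{j-1})$; since $b_j=-\tfrac{2}{5}+2(n-1)a_j$ these are identical expressions, and the subsequent algebra (factoring out $(-3\pm\sqrt5)^{j-1}$, using $(-3\pm\sqrt5)+2=-1\pm\sqrt5$, and rationalizing to produce the $5+\sqrt5$ denominator) matches the paper's computation.
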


\begin{proof}
Consider $X=\circulant(b_0,b_1,\ldots,b_{n-2})$ in Corollary \ref{X=circ}. Then 
\begin{eqnarray*}
    Y &=& J_{n-1}+C^TX \\
    &=& J_{n-1}+\circulant(b_{n-2}+b_0,b_0+b_1,...,b_{n-3}+b_{n-2})\\
    &=& \circulant(1+b_{n-2}+b_0,1+b_0+b_1,...,1+b_{n-3}+b_{n-2}).
\end{eqnarray*}

Then $Y=\circulant(d_0,d_1,\ldots,d_{n-2})$ where

\[d_j=1+b_j+b_{j-1},\;j=0,1,\ldots,n-2\; (\text{where $b_{-1}=b_{n-2}$}).
\]

\vspace*{-12pt}
\begin{eqnarray*}
d_0 &=& 1+b_{n-2}+b_0\\
&=& 1-\frac{2}{5}+\frac{2^{n-(n-2)}(n-1)}{\sqrt{5}}\left[\frac{(-3+\sqrt{5})^{n-2}}{2^{n-1}-(-3+\sqrt{5})^{n-1}}-\frac{(-3-\sqrt{5})^{n-2}}{2^{n-1}-(-3-\sqrt{5})^{n-1}}\right]\\
& & \;\;\;-\frac{2}{5}+\frac{2^{n}(n-1)}{\sqrt{5}}\left[\frac{1}{2^{n-1}-(-3+\sqrt{5})^{n-1}}-\frac{1}{2^{n-1}-(-3-\sqrt{5})^{n-1}}\right]\\
&=& \frac{1}{5} +\frac{4(n-1)}{\sqrt{5}}\left[\frac{(-3+\sqrt{5})^{n-2}+2^{n-2}}{2^{n-1}-(-3+\sqrt{5})^{n-1}}-\frac{(-3-\sqrt{5})^{n-2}+2^{n-2}}{2^{n-1}-(-3-\sqrt{5})^{n-1}}\right]\\
\end{eqnarray*}

For $j=1,2,\ldots,n-2$,
\begin{eqnarray*}
    d_j & = & 1+ b_j +b_{j-1} \\
    & = & 1+\frac{2^{n-j}(n-1)}{\sqrt{5}}\left[\frac{(-3+\sqrt{5})^j}{2^{n-1}-(-3+\sqrt{5})^{n-1}}-\frac{(-3-\sqrt{5})^j}{2^{n-1}-(-3-\sqrt{5})^{n-1}}\right]-\frac{2}{5} \\
    & & \;\;\;+\frac{2^{n-j+1}(n-1)}{\sqrt{5}}\left[\frac{(-3+\sqrt{5})^{j-1}}{2^{n-1}-(-3+\sqrt{5})^{n-1}}-\frac{(-3-\sqrt{5})^{j-1}}{2^{n-1}-(-3-\sqrt{5})^{n-1}}\right]-\frac{2}{5} \\
    & = & \frac{1}{5} + \frac{2^{n-j}(n-1)}{\sqrt{5}}\left[\frac{(-3+\sqrt{5})^j}{2^{n-1}-(-3+\sqrt{5})^{n-1}}-\frac{(-3-\sqrt{5})^j}{2^{n-1}-(-3-\sqrt{5})^{n-1}}\right] \\
    & & \;\;\;+\frac{2^{n-j}(n-1)}{\sqrt{5}}\left[\frac{2(-3+\sqrt{5})^{j-1}}{2^{n-1}-(-3+\sqrt{5})^{n-1}}-\frac{2(-3-\sqrt{5})^{j-1}}{2^{n-1}-(-3-\sqrt{5})^{n-1}}\right]\\
    & = & \frac{1}{5}+\frac{2^{n-j}(n-1)}{\sqrt{5}}\left[\frac{(-3+\sqrt{5}+2)(-3+\sqrt{5})^{j-1}}{2^{n-1}-(-3+\sqrt{5})^{n-1}}
    -\frac{(-3-\sqrt{5}+2)(-3-\sqrt{5})^{j-1}}{2^{n-1}-(-3-\sqrt{5})^{n-1}}\right]\\
    & = & \frac{1}{5}+\frac{2^{n-j}(n-1)}{\sqrt{5}}\left[\frac{(-1+\sqrt{5})(-3+\sqrt{5})^{j-1}}{2^{n-1}-(-3+\sqrt{5})^{n-1}}
    +\frac{(1+\sqrt{5})(-3-\sqrt{5})^{j-1}}{2^{n-1}-(-3-\sqrt{5})^{n-1}}\right]\\
    & = & \frac{1}{5}+\frac{2^{n-j}(n-1)}{\sqrt{5}(1+\sqrt{5})}\left[\frac{(1+\sqrt{5})(-1+\sqrt{5})(-3+\sqrt{5})^{j-1}}{2^{n-1}-(-3+\sqrt{5})^{n-1}}
    +\frac{(1+\sqrt{5})^2(-3-\sqrt{5})^{j-1}}{2^{n-1}-(-3-\sqrt{5})^{n-1}}\right]\\
    & = & \frac{1}{5}+\frac{2^{n-j}(n-1)}{\sqrt{5}+5}\left[\frac{4(-3+\sqrt{5})^{j-1}}{2^{n-1}-(-3+\sqrt{5})^{n-1}}
    +\frac{2(3+\sqrt{5})(-3-\sqrt{5})^{j-1}}{2^{n-1}-(-3-\sqrt{5})^{n-1}}\right]\\
    & = & \frac{1}{5}+\frac{2^{n+1-j}(n-1)}{5+\sqrt{5}}\left[\frac{2(-3+\sqrt{5})^{j-1}}{2^{n-1}-(-3+\sqrt{5})^{n-1}}
    -\frac{(-3-\sqrt{5})^{j}}{2^{n-1}-(-3-\sqrt{5})^{n-1}}\right].
    % & = & \frac{1}{5} + \frac{2^{n-j}(n-1)}{\sqrt{5}}\left[\frac{2(-3+\sqrt{5})^j}{2^n-(-3+\sqrt{5})^n}-\frac{2(-3-\sqrt{5})^j}{2^n-(-3-\sqrt{5})^n}\right] \\
    % & & \;\;\; +\frac{2^{n-j}(n-1)}{\sqrt{5}}\left[\frac{(-3+\sqrt{5})^{j-1}}{2^n-(-3+\sqrt{5})^n}-\frac{(-3-\sqrt{5})^{j-1}}{2^n-(-3-\sqrt{5})^n}\right] \\
    % & = & \frac{1}{5}+\frac{2^{n-j}(n-1)}{\sqrt{5}}\left[\frac{(2(-3+\sqrt{5})+1)(-3+\sqrt{5})^{j-1}}{2^n-(-3+\sqrt{5})^n}-\frac{(2(-3-\sqrt{5})+1)(-3-\sqrt{5})^{j-1}}{2^n-(-3-\sqrt{5})^n}\right] \\
    % & = & \frac{1}{5}+\frac{2^{n-j}(n-1)}{\sqrt{5}}\left[\frac{\sqrt{5}(2-\sqrt{5})(-3+\sqrt{5})^{j-1}}{2^n-(-3+\sqrt{5})^n}+\frac{\sqrt{5}(2+\sqrt{5})(-3-\sqrt{5})^{j-1}}{2^n-(-3-\sqrt{5})^n}\right] \\
    % & = & \frac{1}{5}+2^{n-j}(n-1)\left[\frac{(2-\sqrt{5})(-3+\sqrt{5})^{j-1}}{2^n-(-3+\sqrt{5})^n}+\frac{(2+\sqrt{5})(-3-\sqrt{5})^{j-1}}{2^n-(-3-\sqrt{5})^n}\right]\\
    % & = & \frac{1}{5}+\frac{2^{n-j}(n-1)}{2+\sqrt{5}}\left[-\frac{(-3+\sqrt{5})^{j-1}}{2^n-(-3+\sqrt{5})^n}+\frac{(9+4\sqrt{5})(-3-\sqrt{5})^{j-1}}{2^n-(-3-\sqrt{5})^n}\right]
\end{eqnarray*}
\end{proof}

%{\color{blue} Since $X=\circulant(b_0,b_1,\ldots,b_{n-1})$ is symmetric, $b_i=b_{n-i}$ for $i=1,2,\ldots,n-1$.}\\

Now we study the Moore–Penrose inverse of the signless Laplacian matrix of $W_n$.

\begin{theorem}
Let $W_n$ be the wheel graph on $n$ vertices with the signless Laplacian matrix $Q$ given by
\[Q=\left[\begin{array}{c|c}
n-1 & \bm{1}^T \\
\hline
\bm{1} & B
\end{array}\right],\]
where $B$ is the circulant matrix $\circulant(3,1,0,...,0,1)$ of order $n-1$. The Moore-Penrose inverse of $Q$ is given by
\[Q^+=\frac{1}{4(n-1)}
\left[\begin{array}{r|c}
    5 & -\bm{1}^T \\
    \hline -\bm{1} & J_{n-1}+2X
    \end{array}\right],\]
where $X=2(CC^T+I_{n-1})^{-1}\left[ (n-1)I_{n-1}-J_{n-1}\right]=\circulant(b_0,b_1,\ldots,b_{n-1})$ with
\[b_j=-\frac{2}{5}+\frac{2^{n-j}(n-1)}{\sqrt{5}}\left[\frac{(-3+\sqrt{5})^j}{2^{n-1}-(-3+\sqrt{5})^{n-1}}-\frac{(-3-\sqrt{5})^j}{2^{n-1}-(-3-\sqrt{5})^{n-1}}\right].\]
\end{theorem}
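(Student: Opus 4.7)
The plan is to verify directly that the claimed matrix
\[H = \frac{1}{4(n-1)}\left[\begin{array}{r|c} 5 & -\bm{1}^T \\ \hline -\bm{1} & J_{n-1} + 2X \end{array}\right]\]
satisfies $QH = I_n$. Since $Q$ is $n\times n$, this single identity will imply that $Q$ is invertible with $H = Q^{-1}$, and in particular $H = Q^+$. No separate verification of the symmetry conditions or of $HQ = I_n$ will be required, so the proof reduces to a block computation in the style of the verification used in Theorem \ref{M^+}.

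The computation is a direct block multiplication using the given block form of $Q$. The $(1,1)$ scalar block of $QH$ becomes $(n-1)\cdot 5 + \bm{1}^T(-\bm{1}) = 4(n-1)$. The $(2,1)$ block becomes $5\bm{1} - B\bm{1}$, which vanishes because the row sum of $B = \circulant(3,1,0,\ldots,0,1)$ equals $5$, so $B\bm{1} = 5\bm{1}$. The $(1,2)$ block becomes $-(n-1)\bm{1}^T + \bm{1}^T J_{n-1} + 2\bm{1}^T X = 2\bm{1}^T X$, which also vanishes by the identity $\bm{1}^T X = \bm{0}^T$ already established in the proof of Theorem \ref{M^+}.

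The $(2,2)$ block carries the real content. It equals $-J_{n-1} + BJ_{n-1} + 2BX = 4J_{n-1} + 2BX$, again using $BJ_{n-1} = 5J_{n-1}$. The key simplification is to recognize $B = CC^T + I_{n-1}$ (since $CC^T = \circulant(2,1,0,\ldots,0,1)$ is the signless Laplacian of the rim cycle, and the additional $I_{n-1}$ accounts for incidences with the hub edges), so the very definition of $X$ yields
\[BX = (CC^T + I_{n-1})\cdot 2(CC^T + I_{n-1})^{-1}\bigl[(n-1)I_{n-1} - J_{n-1}\bigr] = 2\bigl[(n-1)I_{n-1} - J_{n-1}\bigr].\]
Substituting, the $(2,2)$ block becomes $4J_{n-1} + 4(n-1)I_{n-1} - 4J_{n-1} = 4(n-1)I_{n-1}$, and after dividing by $4(n-1)$ the full product $QH$ equals $I_n$ as desired.

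The stated circulant description of $X$ together with the explicit formula for $b_j$ is precisely Corollary \ref{X=circ}, so no additional work is needed for that part of the conclusion. Since every step is a direct block computation backed by identities already available in the paper, there is no significant obstacle; the only place where care is needed is keeping the identification $B = CC^T + I_{n-1}$ visible, so that $X$ can be recognized as $2B^{-1}[(n-1)I_{n-1} - J_{n-1}]$ at the moment when $BX$ must be simplified.
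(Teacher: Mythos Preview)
Your proof is correct and takes a genuinely different route from the paper. The paper derives $Q^+$ by writing $Q = MM^T$, using the identity $Q^+ = (M^+)^T M^+$, and then computing this product block by block; in particular the paper must simplify $X^2 + Y^T Y$ to $(n-1)[J_{n-1}+2X]$, which requires several identities about $X$ and $Y$. Your approach instead postulates the candidate $H$ and verifies $QH = I_n$ directly, invoking only $B\bm 1 = 5\bm 1$, $\bm 1^T X = \bm 0^T$, and the identification $B = CC^T + I_{n-1}$ that makes $BX = 2[(n-1)I_{n-1}-J_{n-1}]$ immediate. Your computation is shorter and avoids the Moore--Penrose machinery entirely, since $QH = I_n$ for square $Q$ already forces $H = Q^{-1} = Q^+$; the paper's route, on the other hand, explains \emph{where} the formula for $Q^+$ comes from (namely from $M^+$), rather than merely confirming a formula that has been handed down.
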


\begin{proof}
First note that $Q=MM^T$ for the incidence matrix $M$ of the form
\[M=\left[\begin{array}{c|c}
\bm{1}^T & \bm{0}^T \\
\hline
I_{n-1} & C
\end{array}\right],\]
where $C$ is the circulant matrix $\circulant(1,0,...,0,1)$ of order $n-1$. By Theorem \ref{M^+},
\[M^+=\frac{1}{2(n-1)} \left[\begin{array}{r|c}
2\bm{1} & X \\
\hline
-\bm{1} & Y
\end{array}\right],\]
where $X=2(CC^T+I_{n-1})^{-1}\left[ (n-1)I_{n-1}-J_{n-1}\right]$ and $Y= J_{n-1}+C^TX$.

\begin{align*}
Q^+ &=\; (MM^T)^+\\
&=\; (M^+)^TM^+\\
&=\; \frac{1}{4(n-1)^2}\left[\begin{array}{c|c}
    2\bm{1}^T & -\bm{1}^T \\
    \hline X^T & Y^T
    \end{array}\right]
    \left[\begin{array}{c|c}
    2\bm{1} & X \\
    \hline -\bm{1} & Y
    \end{array}\right] \\
&=\; \frac{1}{4(n-1)^2}\left[\begin{array}{c|c}
    2\bm{1}^T & -\bm{1}^T \\
    \hline X & Y^T
    \end{array}\right]
    \left[\begin{array}{c|c}
    2\bm{1} & X \\
    \hline -\bm{1} & Y
    \end{array}\right] \;\;(\text{since $X$ is symmetric}) \\
&=\; \frac{1}{4(n-1)^2}\left[\begin{array}{c|c}
    4\bm{1}^T\bm{1}+\bm{1}^T\bm{1} & 2\bm{1}^TX-\bm{1}^TY \\
    \hline
    2X\bm{1}-Y^T\bm{1} & X^2+Y^TY
    \end{array}\right] \\
&=\; \frac{1}{4(n-1)^2}\left[\begin{array}{c|c}
    4(n-1)+(n-1) & 2\bm{1}^TX-\bm{1}^T[J_{n-1}+C^TX] \\
    \hline
    2X\bm{1}-[J_{n-1}+XC]\bm{1} & X^2+Y^TY
    \end{array}\right] \\
&=\; \frac{1}{4(n-1)^2}\left[\begin{array}{c|c}
    5(n-1) & 2\bm{1}^TX-(n-1)\bm{1}^T-2\bm{1}^TX \\
    \hline
    2X\bm{1}-(n-1)\bm{1}-2X\bm{1} & X^2+Y^TY
    \end{array}\right] \\
&=\; \frac{1}{4(n-1)^2}\left[\begin{array}{c|c}
    5(n-1) & -(n-1)\bm{1}^T \\
    \hline
    -(n-1)\bm{1} & X^2+Y^TY\\
    \end{array}\right] \tag{2}\label{eq2}
\end{align*}

Now we simplify $X^2+Y^TY$ as follows.
\begin{align*}
&\;\;\;\;\; X^2+Y^TY\\
& =\; X^2+(J_{n-1}+C^TX)^T (J_{n-1}+C^TX) \\
& = X^2+(J_{n-1}+XC)(J_{n-1}+C^TX) \;\;(\text{since $X$ is symmetric}) \\
& =\; X^2+J_{n-1}^2+J_{n-1}C^TX+XCJ_{n-1}+XCC^TX \\
& = \left( XI_{n-1}X+XCC^TX\right)+J_{n-1}^2+J_{n-1}XC^T+XJ_{n-1}C \\
& =\; X(I_{n-1}+CC^T)X+(n-1)J_{n-1} \;\;(\text{since $J_{n-1}X=XJ_{n-1}=0$}) \\
& =\; 2X[(n-1)I_{n-1}-J_{n-1}]+(n-1)J_{n-1} 
\;\;(\text{since $(CC^T+I_{n-1})X=2\left[ (n-1)I_{n-1}-J_{n-1}\right]$}) \\
& =\; (n-1)2X-2XJ_{n-1}+(n-1)J_{n-1} \;\;(\text{since $XJ_{n-1}=0$}) \\
& =\; (n-1)[J_{n-1}+2X]
\end{align*}

Plugging $X^2+Y^TY=(n-1)[J_{n-1}+2X]$ in (\ref{eq2}), we get
\begin{eqnarray*}
Q^+ &=& \frac{1}{4(n-1)^2}\left[\begin{array}{c|c}
    5(n-1) & -(n-1)\bm{1}^T \\
    \hline
    -(n-1)\bm{1} & (n-1)[J_{n-1}+2X]\\
    \end{array}\right] \\    
&=&  \frac{1}{4(n-1)}\left[\begin{array}{r|c}
    5 & -\bm{1}^T \\
    \hline -\bm{1} & J_{n-1}+2X
    \end{array}\right], 
\end{eqnarray*}
where $X$ is given by Corollary \ref{X=circ}.
\end{proof}

\begin{example}
Consider $W_6$ with vertex and edge labeling given in Figure \ref{fig:W6} and its signless Laplacian matrix $Q$. The Moore-Penrose inverse $Q^+$ of $Q$ is as follows.

\[Q=\left[\begin{array}{rrrrrr}
5 & 1 & 1 & 1 & 1 & 1 \\
1 & 3 & 1 & 0 & 0 & 1 \\
1 & 1 & 3 & 1 & 0 & 0 \\
1 & 0 & 1 & 3 & 1 & 0 \\
1 & 0 & 0 & 1 & 3 & 1 \\
1 & 1 & 0 & 0 & 1 & 3
\end{array}\right],\;
Q^+=\frac{1}{20}\left[\begin{array}{r|rrrrr}
5 & -1 & -1 & -1 & -1 & -1 \\
\hline
-1 & 9 & -3 & 1 & 1 & -3 \\
-1 & -3 & 9 & -3 & 1 & 1 \\
-1 & 1 & -3 & 9 & -3 & 1 \\
-1 & 1 & 1 & -3 & 9 & -3 \\
-1 & -3 & 1 & 1 & -3 & 9
\end{array}\right].\]
\end{example}

\section{Oriented incidence and Laplacian matrices}
\begin{theorem}\label{N^+}
Let $W_n$ be the wheel graph on $n$ vertices with the oriented incidence matrix $N$ given by
\[N=\left[\begin{array}{c|c}
\bm{1}^T & \bm{0}^T \\
\hline
-I_{n-1} & C
\end{array}\right],\]
where $C$ is the circulant matrix $\circulant(1,0,...,0,-1)$ of order $n-1$. The Moore-Penrose inverse of $N$ is given by
\[N^+=\frac{1}{n} \left[\begin{array}{r|c}
\bm{1} & X \\
\hline
\bm{0} & Y
\end{array}\right],\]
where $X=(CC^T+I_{n-1})^{-1}(J_{n-1}-nI_{n-1})$ and $Y=-C^TX$.
\end{theorem}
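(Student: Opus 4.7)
The plan is to verify the four Moore--Penrose conditions directly, via block matrix computation, mirroring the structure of the proof of Theorem~\ref{M^+}. A crucial difference from the unoriented case is that the column sums of $N$ all vanish (each edge contributes one $+1$ and one $-1$), so $\bm{1}^T N=\bm{0}^T$; consequently $N$ is not of full row rank, and one cannot hope to shortcut the argument by showing $NH=I_n$. All four defining identities must be checked independently.

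First I would record the necessary facts about $C$ and $X$. A direct computation shows $CC^T+I_{n-1}=\circulant(3,-1,0,\ldots,0,-1)$, which is strictly diagonally dominant, hence invertible, and has row sum~$1$. By Proposition~\ref{circulant prop}, its inverse is then a symmetric circulant matrix of row sum~$1$, so $\bm{1}^T(CC^T+I_{n-1})^{-1}=\bm{1}^T$. Multiplying by $J_{n-1}-nI_{n-1}$ yields $\bm{1}^T X=-\bm{1}^T$ and, by symmetry of $X$, also $X\bm{1}=-\bm{1}$; symmetry of $X$ itself follows because it is a product of two symmetric circulants (and circulants commute). Finally, since the column sums of $C$ vanish, $C^T\bm{1}=\bm{0}$, $\bm{1}^T C=\bm{0}^T$, and $JC=CJ=0$. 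The defining identity $(I_{n-1}+CC^T)X=J_{n-1}-nI_{n-1}$ will carry most of the load.

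Armed with these, I would compute the four blocks of $NH$: the bottom-right entry becomes $-X+CY=-X-CC^TX=-(I+CC^T)X=nI-J$, and the top-right block uses $\bm{1}^T X=-\bm{1}^T$, giving a manifestly symmetric $NH$. Next I would compute $HN$ and, using $Y=-C^TX$ and the symmetry of $X$, read off the blocks $J-X$, $XC$, $C^TX$, $-C^TXC$, again manifestly symmetric. The identities $NHN=N$ and $HNH=H$ would then be established by expanding $N(HN)$ and $(HN)H$ block by block; the simplifications use $\bm{1}^T C=\bm{0}^T$, $C^T\bm{1}=\bm{0}$, $JC=0$, the defining relation $(I+CC^T)X=J-nI$, and the vector identities $X\bm{1}=-\bm{1}$, $\bm{1}^T X=-\bm{1}^T$. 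For instance, the bottom-right block of $NHN$ reduces to $-(I+CC^T)XC=-(J-nI)C=nC$, and the top-right block of $HNH$ reduces to $JX-X^2-XCC^TX=JX-X(I+CC^T)X=-J-X(J-nI)=-J+J+nX=nX$.

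The main obstacle is conceptual rather than computational: recognizing the small set of algebraic identities that make every block collapse, namely $(I+CC^T)X=J-nI$, $X\bm{1}=-\bm{1}$, and $C^T\bm{1}=\bm{0}$. Once these are in hand, each of the four verifications is a short block multiplication directly analogous to those in the proof of Theorem~\ref{M^+}, and together they yield $H=N^+$.
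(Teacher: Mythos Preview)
Your proposal is correct and follows essentially the same route as the paper: the same block partition, the same key identities $(I_{n-1}+CC^T)X=J_{n-1}-nI_{n-1}$, $\bm{1}^TX=-\bm{1}^T$, and the symmetry of $X$ via Proposition~\ref{circulant prop}. The only tactical difference is that the paper recognizes the product $NH$ explicitly as $I_n-\tfrac{1}{n}J_n$ and then dispatches $NHN=N$ and $HNH=H$ in one line each (using $J_nN=0$ and $HJ_n=0$), whereas you verify those two identities by expanding $N(HN)$ and $(HN)H$ block by block; both arguments are valid and rest on the same ingredients.
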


\begin{proof}
First note that 
\[CC^T+I_{n-1}=\circulant(3,-1,0,\ldots,0,-1)\]
is strictly diagonally dominant and consequently invertible. Let
\[H=\frac{1}{n} \left[\begin{array}{r|c}
\bm{1} & X \\
\hline
\bm{0} & Y
\end{array}\right],\]
where $X=(CC^T+I_{n-1})^{-1}(J_{n-1}-nI_{n-1})$ and $Y=-C^TX$. We show that $H=N^+$.

\begin{align*}
    NH &=\; \frac{1}{n}\left[\begin{array}{c|c}
    \bm{1}^T & \bm{0}^T \\
    \hline
    -I_{n-1} & C
    \end{array}\right]\left[\begin{array}{c|c}
    \bm{1} & X \\
    \hline
    \bm{0} & Y
    \end{array}\right] \\
    &=\; \frac{1}{n}\left[\begin{array}{c|c}
    \bm{1}^T\bm{1} & \bm{1}^TX \\
    \hline
    -\bm{1} & -X+CY
    \end{array}\right] \\
    &=\; \frac{1}{n}\left[\begin{array}{c|c}
    n-1 & \bm{1}^TX \\
    \hline
    -\bm{1} & -X+CY
    \end{array}\right] \tag{3}\label{eq3}
\end{align*}

Since the row sum of $CC^T+I_{n-1}=\circulant(3,-1,0,\ldots,0,-1)$ is $1$, $\bm{1}^{T}(CC^{T}+I_{n-1})^{-1}=\frac{1}{1}\bm{1}^T=\bm{1}^T$ by Proposition \ref{circulant prop}. Then

\begin{eqnarray*}
    \bm{1}^TX &=& \bm{1}^T(CC^T+I_{n-1})^{-1}(J_{n-1}-nI_{n-1}) \\
    &=& \bm{1}^T(J_{n-1}-nI_{n-1}) \\
    &=& (n-1)\bm{1}^T-n\bm{1}^T\\
    &=& -\bm{1}^T.
\end{eqnarray*}

Now we simplify $CY-X$ as follows.
\begin{eqnarray*}
    CY-X &=& C(-C^TX)-X \\
    &=& -CC^TX-X \\
    &=& -(CC^T+I_{n-1})X \\
    &=& -(CC^T+I_{n-1})(CC^T+I_{n-1})^{-1}(J_{n-1}-nI_{n-1}) \\
    &=& -(J_{n-1}-nI_{n-1}) \\
    &=& nI_{n-1}-J_{n-1}.
\end{eqnarray*}

Putting $\bm{1}^{T}X = -\bm{1}^{T}$ and $CY-X=nI_{n-1}-J_{n-1}$ in (\ref{eq3}), we get

\begin{eqnarray*}
NH &=& \frac{1}{n}\left[\begin{array}{c|c}
    n-1 & -\bm{1}^T \\
    \hline
    -\bm{1} & nI_{n-1}-J_{n-1}
    \end{array}\right]
=\left[\begin{array}{c|c}
1-\frac{1}{n} & -\frac{1}{n}\bm{1}^T \\
\hline
-\frac{1}{n}\bm{1} & I_{n-1}-\frac{1}{n}J_{n-1}
\end{array}\right] \;=\; I_n-\frac{1}{n}J_n.
\end{eqnarray*}

Now we show NHN=N.

\begin{eqnarray*}
    NHN &=& \left(I_n-\frac{1}{n}J_n\right)N \\
    &=& N-\frac{1}{n}J_nN \\
    &=& N \;\; (\text{since the column sum of N is 0})
\end{eqnarray*}

We also show HNH=H.

\begin{eqnarray*}
    HNH &=& H\left(I_n-\frac{1}{n}J_n\right) \\
    &=& H-\frac{1}{n}HJ_n
\end{eqnarray*}

To show $H-\frac{1}{n}HJ_n=H$, we show that $HJ_n=O$. Note that 

\begin{eqnarray*}
    HJ_n &=& \left[\begin{array}{c|c}
    \bm{1} & X \\
    \hline
    \bm{0} & Y
    \end{array}\right]\left[\begin{array}{c|c}
    \bm{1}^T & \bm{1}^T \\
    \hline
    J_{n-1} & J_{n-1}
    \end{array}\right] 
    =\left[\begin{array}{c|c}
    J_{n-1}+XJ_{n-1} & J_{n-1}+XJ_{n-1} \\
    \hline
    YJ_{n-1} & YJ_{n-1}
    \end{array}\right].
\end{eqnarray*}

To show $H-\frac{1}{n}HJ_n=H$, it suffices to show $J_{n-1}+XJ_{n-1}=O$ and $YJ_{n-1}=O$.

\begin{eqnarray*}
    J_{n-1}+XJ_{n-1} &=& J_{n-1}+(CC^T+I_{n-1})^{-1}(J_{n-1}-nI_{n-1})J_{n-1} \\
    &=& J_{n-1}+(CC^T+I_{n-1})^{-1}((n-1)J_{n-1}-nJ_{n-1}) \\
    &=& J_{n-1}+(CC^T+I_{n-1})^{-1}(-J_{n-1}) \\
    &=& J_{n-1}-J_{n-1} \;\; (\text{since the row sum of $(CC^T+I_{n-1})^{-1}$ is $1$}) \\
    &=& O
\end{eqnarray*}

\vspace*{-14pt}
\begin{eqnarray*}
    YJ_{n-1} &=& -C^TXJ_{n-1} \\
    &=& -C^T(CC^T+I_{n-1})^{-1}(J_{n-1}-nI_{n-1})J_{n-1} \\
    &=& -C^T(CC^T+I_{n-1})^{-1}(-J_{n-1}) \\
    &=& C^TJ_{n-1} \\
    &=& O\;\; (\text{since the row sum of $C^T$ is $0$})
\end{eqnarray*}

Note that $NH=I_n-\frac{1}{n}J_n$ is symmetric. It remains to show that $HN$ is symmetric.

\begin{eqnarray*}
    HN &=& \frac{1}{n}\left[\begin{array}{c|c}
    \bm{1} & X \\
    \hline
    \bm{0} & Y
    \end{array}\right]\left[\begin{array}{c|c}
    \bm{1}^T & \bm{0}^T \\
    \hline
    -I_{n-1} & C
    \end{array}\right] \\
    &=& \frac{1}{n}\left[\begin{array}{c|c}
    \bm{1}\bm{1}^T-X & XC \\
    \hline
    -Y & YC
    \end{array}\right] \\
    &=& \frac{1}{n}\left[\begin{array}{c|c}
    J_{n-1}-X & XC \\
    \hline
    -Y & YC
    \end{array}\right] \\
    &=& \frac{1}{n}\left[\begin{array}{c|c}
    J_{n-1}-X & XC \\
    \hline
    C^TX & -C^TXC
    \end{array}\right]
\end{eqnarray*}

To show $HN$ is symmetric, it suffices to show that $X$ is symmetric . Note that $CC^T+I_{n-1}$ is a symmetric circulant matrix and so is $(CC^T+I_{n-1})^{-1}$ by Proposition \ref{circulant prop}. Also $J_{n-1}-nI_{n-1}$ is a symmetric circulant matrix. Then so is 
\[X=(CC^T+I_{n-1})^{-1}\left[J_{n-1}-nI_{n-1}\right]\]
as a product of two symmetric circulant matrices.\\

Thus $H=N^+$.
\end{proof}

\begin{example}
Consider $W_6$ with vertex and edge labeling and edge orientation given in Figure \ref{fig:OW6} and its oriented incidence matrix $N$. The Moore-Penrose inverse $N^+$ of $N$ is as follows.

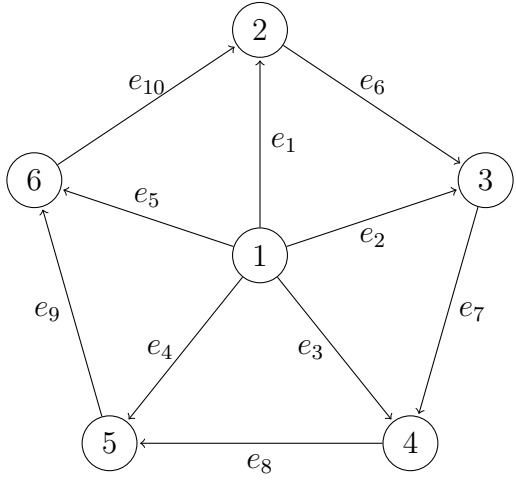
\begin{figure}
\centering
\begin{tikzpicture}[shorten > = 1pt, auto, node distance = .5cm ]
\tikzset{vertex/.style = {shape = circle, draw, minimum size = 1em}}
\tikzset{edge/.style = {-}}
%vertices
\node[vertex] (1) at (0,0){$1$};
\node[vertex] (2) at (0,3){$2$};
\node[vertex] (3) at (3,1){$3$};
\node[vertex] (4) at (2,-2.5){$4$};
\node[vertex] (5) at (-2,-2.5){$5$};
\node[vertex] (6) at (-3,1){$6$};
%edges
\draw[->] (1) edge node[right]{$e_1$} (2);
\draw[->] (1) edge node[below]{$e_2$} (3);
\draw[->] (1) edge node[left]{$e_3$} (4);
\draw[->] (1) edge node[left]{$e_4$} (5);
\draw[->] (1) edge node[above]{$e_5$} (6);
\draw[->] (2) edge node[above]{$e_6$} (3);
\draw[->] (3) edge node[right]{$e_7$} (4);
\draw[->] (4) edge node[below]{$e_8$} (5);
\draw[->] (5) edge node[left]{$e_9$} (6);
\draw[->] (6) edge node[above]{$e_{10}$} (2);
\end{tikzpicture}
\caption{An oriented wheel graph on $6$ vertices}
\label{fig:OW6}
\end{figure}

\[N=\left[\begin{array}{rrrrr|rrrrr}
1 & 1 & 1 & 1 & 1 & 0 & 0 & 0 & 0 & 0 \\
\hline
-1 & 0 & 0 & 0 & 0 & 1 & 0 & 0 & 0 & -1 \\
0 & -1 & 0 & 0 & 0 & -1 & 1 & 0 & 0 & 0 \\
0 & 0 & -1 & 0 & 0 & 0 & -1 & 1 & 0 & 0 \\
0 & 0 & 0 & -1 & 0 & 0 & 0 & -1 & 1 & 0 \\
0 & 0 & 0 & 0 & -1 & 0 & 0 & 0 & -1 & 1
\end{array}\right],\]
\[N^+=\frac{1}{66}\left[\begin{array}{r|rrrrr}
11 & -19 & -1 & 5 & 5 & -1 \\
11 & -1 & -19 & -1 & 5 & 5 \\
11 & 5 & -1 & -19 & -1 & 5 \\
11 & 5 & 5 & -1 & -19 & -1 \\
11 & -1 & 5 & 5 & -1 & -19 \\
\hline
0 & 18 & -18 & -6 & 0 & 6 \\
0 & 6 & 18 & -18 & -6 & 0 \\
0 & 0 & 6 & 18 & -18 & -6 \\
0 & -6 & 0 & 6 & 18 & -18 \\
0 & -18 & -6 & 0 & 6 & 18
\end{array}\right].\]
\end{example}

Theorem \ref{N^+} does not provide an explicit formula for each entry of $N^+$. To do that, we use the following result.

\begin{corollary}
The inverse of the circulant matrix $\circulant(3,-1,0,...,0,-1)$ of order $n>3$ is given by
\[[\circulant(3,-1,0,...,0,-1)]^{-1}=\circulant(a_0,a_1,\ldots,a_{n-1}),\]
where 
\[a_j=\frac{2^{n-j}}{\sqrt{5}}\left[\frac{(3-\sqrt{5})^j}{2^n-(3-\sqrt{5})^n}-\frac{(3+\sqrt{5})^j}{2^n-(3+\sqrt{5})^n}\right].\]

\end{corollary}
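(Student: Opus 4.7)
The plan is to apply Theorem \ref{Searle1} directly with $a=3$ and $b=c=-1$, then simplify the resulting expression. Before invoking the formula I would verify that none of the excluded cases occur: we have $a^2=9 > 4 = 4bc$, $b=-1 \neq 0$, $a+b+c = 1 \neq 0$, and $b+c = -2 \neq 3 = a$ (so the ``$n$ even and $a=b+c$'' exception is also avoided). Hence Theorem \ref{Searle1} applies and $[\circulant(3,-1,0,\ldots,0,-1)]^{-1}$ is the circulant matrix whose entries are given by the stated formula.

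Next I would compute the roots. With $c=-1$, the quadratic formula gives
\[z_1,z_2 \;=\; \frac{-a\pm\sqrt{a^2-4bc}}{2c} \;=\; \frac{-3\pm\sqrt{5}}{-2} \;=\; \frac{3\mp\sqrt{5}}{2},\]
so I can take $z_1 = (3-\sqrt{5})/2$ and $z_2 = (3+\sqrt{5})/2$. A short calculation gives the two pieces of the prefactor: $z_1 z_2 = (9-5)/4 = 1$ and $z_1 - z_2 = -\sqrt{5}$, whence
\[\frac{z_1 z_2}{b(z_1-z_2)} \;=\; \frac{1}{(-1)(-\sqrt{5})} \;=\; \frac{1}{\sqrt{5}}.\]

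Finally I would substitute these into the expression from Theorem \ref{Searle1} and clear the powers of $2$ from numerator and denominator of each fraction, using
\[\frac{((3\mp\sqrt{5})/2)^j}{1-((3\mp\sqrt{5})/2)^n} \;=\; \frac{2^{n-j}(3\mp\sqrt{5})^j}{2^n-(3\mp\sqrt{5})^n}.\]
Combining the two terms yields exactly
\[a_j \;=\; \frac{2^{n-j}}{\sqrt{5}}\left[\frac{(3-\sqrt{5})^j}{2^n-(3-\sqrt{5})^n}-\frac{(3+\sqrt{5})^j}{2^n-(3+\sqrt{5})^n}\right],\]
which is the claimed formula. There is no real obstacle here — the proof is a one-shot application of Theorem \ref{Searle1} followed by routine algebraic simplification; the only thing to be careful about is that $c=-1$ (not $+1$), which flips the sign of the square root and is why $3\mp\sqrt{5}$ appears instead of $-3\pm\sqrt{5}$ as in the preceding corollary for $\circulant(3,1,0,\ldots,0,1)$.
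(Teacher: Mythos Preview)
Your proposal is correct and follows essentially the same approach as the paper: a direct application of Theorem~\ref{Searle1} with $a=3$, $b=c=-1$, computing $z_1,z_2=(3\mp\sqrt{5})/2$ and simplifying. If anything, you are slightly more thorough, since you explicitly verify the hypotheses of Theorem~\ref{Searle1} before invoking it, whereas the paper omits that check.
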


\begin{proof}
Here $a=3$ and $b=c=-1$. By Theorem \ref{Searle1},

\[a_j = \frac{z_1z_2}{b(z_1-z_2)}\left(\frac{z_1^j}{1-z_1^n}-\frac{z_2^j}{1-z_2^n}\right)\]

where $z_1,z_2=(-3\pm \sqrt{3^2-4(-1)(-1)})/(2(-1))=(3\mp\sqrt{5})/2$. Then 

\begin{eqnarray*}
    a_j &=& \frac{\left(\frac{3-\sqrt{5}}{2}\right) \left(\frac{3+\sqrt{5}}{2}\right)}{-1\left(\frac{3-\sqrt{5}}{2}-\frac{3+\sqrt{5}}{2}\right)}
    \left[\frac{\frac{(3-\sqrt{5})^j}{2^j}}{1-\frac{(3-\sqrt{5})^n}{2^n}}-\frac{\frac{(3+\sqrt{5})^j}{2^j}}{1-\frac{(3+\sqrt{5})^n}{2^n}}\right] \\
    &=& \frac{\frac{9-5}{4}}{\sqrt{5}}\left[\frac{(3-\sqrt{5})^j}{2^j\left(\frac{2^n-(3-\sqrt{5})^n}{2^n}\right)}-\frac{(3+\sqrt{5})^j}{2^j\left(\frac{2^n-(3+\sqrt{5})^n}{2^n}\right)}\right] \\
    &=& \frac{1}{\sqrt{5}}\left[\frac{2^{n-j}(3-\sqrt{5})^j}{2^n-(3-\sqrt{5})^n}-\frac{2^{n-j}(3+\sqrt{5})^j}{2^n-(3+\sqrt{5})^n}\right] \\
    &=& \frac{2^{n-j}}{\sqrt{5}}\left[\frac{(3-\sqrt{5})^j}{2^n-(3-\sqrt{5})^n}-\frac{(3+\sqrt{5})^j}{2^n-(3+\sqrt{5})^n}\right].
\end{eqnarray*}
\end{proof}

\begin{corollary}\label{X=circ2}
$X$ in Theorem \ref{N^+} is given by
$X=\circulant(b_0,b_1,\ldots,b_{n-2})$ where 
\[b_j=1+\frac{n2^{n-1-j}}{\sqrt{5}}\left[\frac{(3+\sqrt{5})^j}{2^{n-1}-(3+\sqrt{5})^{n-1}}-\frac{(3-\sqrt{5})^j}{2^{n-1}-(3-\sqrt{5})^{n-1}}\right].\]
\end{corollary}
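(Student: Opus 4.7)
The plan is to mimic the proof of Corollary \ref{X=circ}, replacing the circulant matrix $\circulant(3,1,0,\ldots,0,1)$ from Section 2 with $\circulant(3,-1,0,\ldots,0,-1)$ from Theorem \ref{N^+}, and carefully tracking sign changes. First I would use Proposition \ref{circulant prop}(d): since the row sum of $CC^T+I_{n-1}=\circulant(3,-1,0,\ldots,0,-1)$ equals $1$, the inverse $(CC^T+I_{n-1})^{-1}$ also has row sum $1$, so $(CC^T+I_{n-1})^{-1}J_{n-1}=J_{n-1}$. This lets me split the expression for $X$ as follows:
\begin{align*}
X &= (CC^T+I_{n-1})^{-1}(J_{n-1}-nI_{n-1}) \\
  &= (CC^T+I_{n-1})^{-1}J_{n-1} - n(CC^T+I_{n-1})^{-1} \\
  &= J_{n-1} - n\,(CC^T+I_{n-1})^{-1}.
\end{align*}

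Next I would apply the preceding corollary, which gives the entries of $[\circulant(3,-1,0,\ldots,0,-1)]^{-1}$ for a matrix of order $n$; here the order is $n-1$, so one simply substitutes $n\mapsto n-1$ in the formula to get $(CC^T+I_{n-1})^{-1}=\circulant(a_0,a_1,\ldots,a_{n-2})$ with
\[a_j=\frac{2^{n-1-j}}{\sqrt{5}}\left[\frac{(3-\sqrt{5})^j}{2^{n-1}-(3-\sqrt{5})^{n-1}}-\frac{(3+\sqrt{5})^j}{2^{n-1}-(3+\sqrt{5})^{n-1}}\right].\]
Since $J_{n-1}=\circulant(1,1,\ldots,1)$ and a sum of circulants is circulant, $X=\circulant(b_0,b_1,\ldots,b_{n-2})$ with $b_j=1-n a_j$.

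Finally I would simplify $1-na_j$ by flipping the sign of the bracket to reverse the order of the two terms, yielding exactly the claimed formula
\[b_j=1+\frac{n\,2^{n-1-j}}{\sqrt{5}}\left[\frac{(3+\sqrt{5})^j}{2^{n-1}-(3+\sqrt{5})^{n-1}}-\frac{(3-\sqrt{5})^j}{2^{n-1}-(3-\sqrt{5})^{n-1}}\right].\]
There is no real obstacle here: the proof is a short, mechanical adaptation of Corollary \ref{X=circ}. The only point requiring attention is the bookkeeping of the order substitution $n\mapsto n-1$ in the previous corollary (since the ambient matrix in Theorem \ref{N^+} is $(n-1)\times(n-1)$) and the sign swap in the bracketed difference when absorbing the factor $-n$.
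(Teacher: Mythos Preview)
Your proposal is correct and follows the paper's own proof essentially step for step: use the row-sum property to replace $(CC^T+I_{n-1})^{-1}J_{n-1}$ by $J_{n-1}$, write $X=J_{n-1}-n(CC^T+I_{n-1})^{-1}$, invoke the preceding corollary (with the order shifted to $n-1$), and flip the sign of the bracket. The only difference is that you make the substitution $n\mapsto n-1$ explicit, whereas the paper applies it silently.
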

\begin{proof} 
Recall  $CC^T+I_{n-1}=\circulant(3,-1,0,\ldots,0,-1)$. Since the row sum of $CC^T+I_{n-1}$ is $1$, $(CC^{T}+I_{n-1})^{-1}J_{n-1}=J_{n-1}$ by Proposition \ref{circulant prop}. Then

\begin{eqnarray*}
    X &=& (CC^T+I_{n-1})^{-1}(J_{n-1}-nI_{n-1}) \\
    &=& [\circulant(3,-1,0,\ldots,0,-1)]^{-1}(J_{n-1}-nI_{n-1}) \\
    &=& [\circulant(3,-1,0,\ldots,0,-1)]^{-1}J_{n-1}-n[\circulant(3,-1,0,\ldots,0,-1)]^{-1} \\
    &=& J_{n-1}-n\circulant(3,-1,0,\ldots,0,-1)]^{-1}.
\end{eqnarray*}

By the preceding corollary, $X=\circulant (b_0,b_1,...,b_{n-2})$ where 

\begin{eqnarray*}
    b_j &=& 1-\frac{n2^{n-1-j}}{\sqrt{5}}\left[\frac{(3-\sqrt{5})^j}{2^{n-1}-(3-\sqrt{5})^{n-1}}-\frac{(3+\sqrt{5})^j}{2^{n-1}-(3+\sqrt{5})^{n-1}}\right] \\
    &=& 1+\frac{n2^{n-1-j}}{\sqrt{5}}\left[\frac{(3+\sqrt{5})^j}{2^{n-1}-(3+\sqrt{5})^{n-1}}-\frac{(3-\sqrt{5})^j}{2^{n-1}-(3-\sqrt{5})^{n-1}}\right].
\end{eqnarray*}
\end{proof}

\begin{corollary}
$Y$ in Theorem \ref{N^+} is given by
$Y=\circulant(d_0,d_1,\ldots,d_{n-2})$ where 
\[d_0=\frac{2n}{\sqrt{5}}\left[\frac{(3+\sqrt{5})^{n-2}-2^{n-2}}{2^{n-1}-(3+\sqrt{5})^{n-1}}   -\frac{(3-\sqrt{5})^{n-2}-2^{n-2}}{2^{n-1}-(3-\sqrt{5})^{n-1}}\right].\]
and for $j=1,2,\ldots,n-2$,
\[d_j=-\frac{n2^{n-j}}{5+\sqrt{5}}\left[\frac{(3+\sqrt{5})^{j}}{2^{n-1}-(3+\sqrt{5})^{n-1}}
+\frac{2(3-\sqrt{5})^{j-1}}{2^{n-1}-(3-\sqrt{5})^{n-1}}\right].\]
\end{corollary}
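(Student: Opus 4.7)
The plan is to use the identity $Y=-C^T X$ from Theorem \ref{N^+} together with the explicit circulant formula for $X$ from Corollary \ref{X=circ2}, and compute the resulting circulant entry-by-entry. Since $C=\circulant(1,0,\ldots,0,-1)$, its transpose is $C^T=\circulant(1,-1,0,\ldots,0)$, and the circulant product rule gives
\[
C^T X = \circulant\bigl(b_0-b_{n-2},\; b_1-b_0,\; b_2-b_1,\; \ldots,\; b_{n-2}-b_{n-3}\bigr),
\]
so $Y=-C^T X=\circulant(d_0,d_1,\ldots,d_{n-2})$ with $d_0=b_{n-2}-b_0$ and $d_j=b_{j-1}-b_j$ for $j=1,2,\ldots,n-2$. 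It thus suffices to substitute the formula for $b_j$ from Corollary \ref{X=circ2} and simplify.

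For $d_0$, the plan is to plug in $b_0 = 1+\tfrac{n2^{n-1}}{\sqrt 5}\bigl[\tfrac{1}{2^{n-1}-(3+\sqrt 5)^{n-1}}-\tfrac{1}{2^{n-1}-(3-\sqrt 5)^{n-1}}\bigr]$ and $b_{n-2}=1+\tfrac{2n}{\sqrt 5}\bigl[\tfrac{(3+\sqrt 5)^{n-2}}{2^{n-1}-(3+\sqrt 5)^{n-1}}-\tfrac{(3-\sqrt 5)^{n-2}}{2^{n-1}-(3-\sqrt 5)^{n-1}}\bigr]$. The constant terms cancel, and pulling out the factor $\tfrac{2n}{\sqrt 5}$ leaves exactly the claimed expression with numerators $(3\pm\sqrt 5)^{n-2}-2^{n-2}$.

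For $j=1,\ldots,n-2$, the key step is to abbreviate $\alpha=3+\sqrt 5$, $\beta=3-\sqrt 5$, $A=2^{n-1}-\alpha^{n-1}$, $B=2^{n-1}-\beta^{n-1}$ and write
\[
d_j \;=\; \frac{n\,2^{n-1-j}}{\sqrt 5}\left[\frac{(2-\alpha)\,\alpha^{j-1}}{A}-\frac{(2-\beta)\,\beta^{j-1}}{B}\right]
\]
after factoring out the common prefactor. Using $2-\alpha=-(1+\sqrt 5)$ and $2-\beta=\sqrt 5-1$ converts this to
\[
d_j \;=\; -\frac{n\,2^{n-1-j}}{\sqrt 5}\left[\frac{(1+\sqrt 5)\,\alpha^{j-1}}{A}+\frac{(\sqrt 5-1)\,\beta^{j-1}}{B}\right].
\]

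The last step, which I expect to be the main (if modest) obstacle because it requires spotting the right normalization, is to rationalize the prefactor by multiplying numerator and denominator by $1+\sqrt 5$ and using the identities $(1+\sqrt 5)^2=2(3+\sqrt 5)=2\alpha$ and $(1+\sqrt 5)(\sqrt 5-1)=4$. This turns the prefactor $\tfrac{1}{\sqrt 5}$ into $\tfrac{1}{5+\sqrt 5}$, converts $(1+\sqrt 5)\alpha^{j-1}$ into $\tfrac{2\alpha^{j}}{1+\sqrt 5}\cdot(1+\sqrt 5)=2\alpha^{j}$, and produces a factor of $4\beta^{j-1}$ in the second term. After pulling out a 2 to form $2^{n-j}$, the expression collapses to the stated
\[
d_j \;=\; -\frac{n\,2^{n-j}}{5+\sqrt 5}\left[\frac{(3+\sqrt 5)^{j}}{2^{n-1}-(3+\sqrt 5)^{n-1}}+\frac{2\,(3-\sqrt 5)^{j-1}}{2^{n-1}-(3-\sqrt 5)^{n-1}}\right],
\]
completing the proof.
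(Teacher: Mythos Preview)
Your proposal is correct and follows essentially the same route as the paper's own proof: both compute $Y=-C^TX$ to get $d_0=b_{n-2}-b_0$ and $d_j=b_{j-1}-b_j$, substitute the formula for $b_j$ from Corollary~\ref{X=circ2}, and simplify the $j\ge 1$ case by multiplying through by $1+\sqrt{5}$ using $(1+\sqrt{5})^2=2(3+\sqrt{5})$ and $(1+\sqrt{5})(\sqrt{5}-1)=4$. The only differences are cosmetic---your abbreviations $\alpha,\beta,A,B$ and the slightly compressed presentation---so the arguments are substantively identical.
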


\begin{proof}
Consider $X=\circulant(b_0,b_1,\ldots,b_{n-2})$ in Corollary \ref{X=circ2}. Then 
\begin{eqnarray*}
    Y &=& -C^TX \\
    &=& -\circulant(b_0-b_{n-2},b_1-b_0,...,b_{n-2}-b_{n-3})\\
    &=& \circulant(b_{n-2}-b_0,b_0-b_1,...,b_{n-3}-b_{n-2}).
\end{eqnarray*}

Then $Y=\circulant(d_0,d_1,\ldots,d_{n-2})$ where

\[d_j=b_{j-1}-b_j,\;j=0,1,\ldots,n-2\; (\text{where $b_{-1}=b_{n-2}$}).
\]

\vspace*{-12pt}
\begin{eqnarray*}
    d_0 & = & b_{n-2}-b_0 \\
    &=& 1+\frac{n2^{n-1-(n-2)}}{\sqrt{5}}\left[\frac{(3+\sqrt{5})^{n-2}}{2^{n-1}-(3+\sqrt{5})^{n-1}}-\frac{(3-\sqrt{5})^{n-2}}{2^{n-1}-(3-\sqrt{5})^{n-1}}\right]\\
    && -1-\frac{n2^{n-1}}{\sqrt{5}}\left[\frac{1}{2^{n-1}-(3+\sqrt{5})^{n-1}}-\frac{1}{2^{n-1}-(3-\sqrt{5})^{n-1}}\right] \\
    &=& \frac{2n}{\sqrt{5}}\left[\frac{(3+\sqrt{5})^{n-2}-2^{n-2}}{2^{n-1}-(3+\sqrt{5})^{n-1}}   -\frac{(3-\sqrt{5})^{n-2}-2^{n-2}}{2^{n-1}-(3-\sqrt{5})^{n-1}}\right]
\end{eqnarray*}

For $j=1,2,\ldots,n-2$,
\begin{eqnarray*}
    d_j & = & b_{j-1}-b_j \\
    &=& 1+\frac{n2^{n-1-(j-1)}}{\sqrt{5}}\left[\frac{(3+\sqrt{5})^{j-1}}{2^{n-1}-(3+\sqrt{5})^{n-1}}-\frac{(3-\sqrt{5})^{j-1}}{2^{n-1}-(3-\sqrt{5})^{n-1}}\right] \\
    & & -1-\frac{n2^{n-1-j}}{\sqrt{5}}\left[\frac{(3+\sqrt{5})^j}{2^{n-1}-(3+\sqrt{5})^{n-1}}-\frac{(3-\sqrt{5})^j}{2^{n-1}-(3-\sqrt{5})^{n-1}}\right] \\
    &=& \frac{n2^{n-1-j}}{\sqrt{5}}\left[\frac{2(3+\sqrt{5})^{j-1}}{2^{n-1}-(3+\sqrt{5})^{n-1}}-\frac{2(3-\sqrt{5})^{j-1}}{2^{n-1}-(3-\sqrt{5})^{n-1}}\right] \\
    & & -\frac{n2^{n-1-j}}{\sqrt{5}}\left[\frac{(3+\sqrt{5})^j}{2^{n-1}-(3+\sqrt{5})^{n-1}}-\frac{(3-\sqrt{5})^j}{2^{n-1}-(3-\sqrt{5})^{n-1}}\right] \\
    &=& \frac{n2^{n-1-j}}{\sqrt{5}}\left[\frac{(2-(3+\sqrt{5}))(3+\sqrt{5})^{j-1}}{2^{n-1}-(3+\sqrt{5})^{n-1}}-\frac{(2-(3-\sqrt{5}))(3-\sqrt{5})^{j-1}}{2^{n-1}-(3-\sqrt{5})^{n-1}}\right] \\
    &=& \frac{n2^{n-1-j}}{\sqrt{5}}\left[\frac{-(1+\sqrt{5})(3+\sqrt{5})^{j-1}}{2^{n-1}-(3+\sqrt{5})^{n-1}}-\frac{(-1+\sqrt{5})(3-\sqrt{5})^{j-1}}{2^{n-1}-(3-\sqrt{5})^{n-1}}\right] \\
    &=& \frac{n2^{n-1-j}}{\sqrt{5}(1+\sqrt{5})}\left[-\frac{(1+\sqrt{5})^2(3+\sqrt{5})^{j-1}}{2^{n-1}-(3+\sqrt{5})^{n-1}}
    +\frac{(1+\sqrt{5})(1-\sqrt{5})(3-\sqrt{5})^{j-1}}{2^{n-1}-(3-\sqrt{5})^{n-1}}\right] \\
    &=& \frac{n2^{n-1-j}}{5+\sqrt{5}}\left[-\frac{(6+2\sqrt{5})(3+\sqrt{5})^{j-1}}{2^{n-1}-(3+\sqrt{5})^{n-1}}
    -\frac{4(3-\sqrt{5})^{j-1}}{2^{n-1}-(3-\sqrt{5})^{n-1}}\right]\\
    &=& -\frac{n2^{n-1-j}}{5+\sqrt{5}}\left[\frac{2(3+\sqrt{5})^{j}}{2^{n-1}-(3+\sqrt{5})^{n-1}}
    +\frac{4(3-\sqrt{5})^{j-1}}{2^{n-1}-(3-\sqrt{5})^{n-1}}\right]\\
    &=& -\frac{n2^{n-j}}{5+\sqrt{5}}\left[\frac{(3+\sqrt{5})^{j}}{2^{n-1}-(3+\sqrt{5})^{n-1}}
    +\frac{2(3-\sqrt{5})^{j-1}}{2^{n-1}-(3-\sqrt{5})^{n-1}}\right].
\end{eqnarray*}
\end{proof}

Now we study the Moore–Penrose inverse of the Laplacian matrix of $W_n$.

\begin{theorem}
Let $W_n$ be the wheel graph on $n$ vertices with the Laplacian matrix $L$ given by
\[L=\left[\begin{array}{c|c}
    n-1 & -\bm{1}^T \\
    \hline
    -\bm{1} & B
    \end{array}\right],\]
where $B$ is the circulant matrix $\circulant(3,-1,0,...,0,-1)$ of order $n-1$. The Moore-Penrose inverse of $L$ is given by
\[L^+=\frac{1}{n^2}\left[\begin{array}{c|c}
    n-1 & -\bm{1}^T \\
    \hline
    -\bm{1} & -J_{n-1}-nX
    \end{array}\right],\]
where $X=(CC^T+I_{n-1})^{-1}\left[J_{n-1}-nI_{n-1}\right]=\circulant(b_0,b_1,\ldots,b_{n-1})$ with 

\[b_j=1+\frac{n2^{n-1-j}}{\sqrt{5}}\left[\frac{(3+\sqrt{5})^j}{2^{n-1}-(3+\sqrt{5})^{n-1}}-\frac{(3-\sqrt{5})^j}{2^{n-1}-(3-\sqrt{5})^{n-1}}\right].\]

\end{theorem}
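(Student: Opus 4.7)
The natural approach is to exploit the factorization $L = NN^T$ where $N$ is the oriented incidence matrix of Theorem~\ref{N^+}, and use the identity $L^+ = (NN^T)^+ = (N^+)^T N^+$ (which holds for any real matrix, as can be checked from the SVD). This is exactly the strategy used earlier for $Q^+$ in the signless Laplacian theorem, and it should go through with only sign changes.

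Using Theorem~\ref{N^+} together with the fact that $X$ is symmetric, we would compute
\[
L^+ \;=\; \frac{1}{n^2}\left[\begin{array}{c|c} \bm{1}^T & \bm{0}^T \\ \hline X & Y^T \end{array}\right]
\left[\begin{array}{c|c} \bm{1} & X \\ \hline \bm{0} & Y \end{array}\right]
\;=\; \frac{1}{n^2}\left[\begin{array}{c|c} n-1 & \bm{1}^T X \\ \hline X\bm{1} & X^2+Y^TY \end{array}\right].
\]
The top-right block collapses via the identity $\bm{1}^T X = -\bm{1}^T$ already established in the proof of Theorem~\ref{N^+}, and since $X$ is symmetric this also yields $X\bm{1} = -\bm{1}$, so the off-diagonal blocks are $-\bm{1}^T$ and $-\bm{1}$, matching the stated form.

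The only remaining task is to simplify the $(n-1)\times(n-1)$ block $X^2 + Y^T Y$. Using $Y = -C^T X$ and the symmetry of $X$, we get $Y^T Y = XC C^T X$, hence $X^2 + Y^T Y = X(I_{n-1} + CC^T)X$. Then the defining relation $(CC^T + I_{n-1})X = J_{n-1} - nI_{n-1}$ collapses this to
\[
X(J_{n-1} - nI_{n-1}) = XJ_{n-1} - nX = -J_{n-1} - nX,
\]
where the last step uses $X\bm{1} = -\bm{1}$, i.e.\ $XJ_{n-1} = -J_{n-1}$. Dividing by $n^2$ gives precisely the claimed block.

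There is no genuine obstacle here; the computation is essentially a mirror image of the $Q^+$ proof, with the role of the identity $\bm{1}^T X = \bm{0}^T$ in the signless case replaced by $\bm{1}^T X = -\bm{1}^T$ here (which is why the off-diagonal blocks become nonzero), and with the relation $(CC^T+I_{n-1})X = 2[(n-1)I_{n-1}-J_{n-1}]$ replaced by $(CC^T+I_{n-1})X = J_{n-1}-nI_{n-1}$. The circulant representation of $X$ with the stated entries $b_j$ is then just a reference to Corollary~\ref{X=circ2}, so no new computation is required at the end.
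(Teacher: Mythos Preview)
Your proposal is correct and follows essentially the same route as the paper: factor $L=NN^T$, invoke $L^+=(N^+)^TN^+$, use Theorem~\ref{N^+} and the symmetry of $X$ to compute the blocks, reduce the off-diagonal blocks via $\bm{1}^TX=-\bm{1}^T$, and collapse $X^2+Y^TY$ to $-J_{n-1}-nX$ through $X(I_{n-1}+CC^T)X=X(J_{n-1}-nI_{n-1})$ together with $XJ_{n-1}=-J_{n-1}$. The only addition you make beyond the paper is the explicit remark that $(NN^T)^+=(N^+)^TN^+$ holds for any real matrix via the SVD, which the paper simply uses without comment.
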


\begin{proof}
First note that $L=NN^T$ for the incidence matrix $N$ of the form
\[N=\left[\begin{array}{c|c}
\bm{1}^T & \bm{0}^T \\
\hline
-I_{n-1} & C
\end{array}\right],\]
where $C$ is the circulant matrix $\circulant(1,0,...,0,-1)$ of order $n-1$. By Theorem \ref{N^+},
\[N^+=\frac{1}{n} \left[\begin{array}{r|c}
\bm{1} & X \\
\hline
\bm{0} & Y
\end{array}\right],\]
where $X=2(CC^T+I_{n-1})^{-1}\left[J_{n-1}-nI_{n-1}\right]$ and $Y=-C^TX$.

\begin{align*}
    L^+ & = (N^+)^TN^+ \\
    & = \frac{1}{n^2}\left[\begin{array}{c|c}
    \bm{1}^T & \bm{0}^T \\
    \hline
    X^T & Y^T
    \end{array}\right]\left[\begin{array}{c|c}
    \bm{1} & X \\
    \hline
    \bm{0} & Y
    \end{array}\right] \\
    & = \frac{1}{n^2}\left[\begin{array}{c|c}
    \bm{1}^T & \bm{0}^T \\
    \hline
    X & Y^T
    \end{array}\right]\left[\begin{array}{c|c}
    \bm{1} & X \\
    \hline
    \bm{0} & Y
    \end{array}\right]\;\;(\text{since $X$ is symmetric}) \\
    & = \frac{1}{n^2}\left[\begin{array}{c|c}
    \bm{1}^T\bm{1} & \bm{1}^TX\\
    \hline
    X\bm{1} & XX+Y^TY
    \end{array}\right] \\
    & = \frac{1}{n^2}\left[\begin{array}{c|c}
    n-1 & -\bm{1}^T \\
    \hline
    -\bm{1} & XX+Y^TY
    \end{array}\right]\;\;(\text{since $\bm{1}^TX=-\bm{1}^T$ and $X\bm{1}=-\bm{1}$}) \tag{4}\label{eq4}
\end{align*}

Now we simplify $XX+Y^TY$ as follows.

\vspace{-12pt}
\begin{align*}
    XX+Y^TY & = XX-XC(-C^TX) \\
    & = XI_{n-1}X+XCC^TX \\
    & = X(CC^T+I_{n-1})X \\
    & = X(CC^T+I_{n-1})(CC^T+I_{n-1})^{-1}(J_{n-1}-nI_{n-1}) \\
    & = X(J_{n-1}-nI_{n-1}) \\
    & = -J_{n-1}-nX
\end{align*}

Plugging $XX+Y^TY=-J_{n-1}-nX$ in (\ref{eq4}), we get

\[L^+ = \frac{1}{n^2}\left[\begin{array}{c|c}
    n-1 & -\bm{1}^T \\
    \hline
    -\bm{1} & -J_{n-1}-nX
    \end{array}\right],\]
where $X$ is given by Corollary \ref{X=circ2}.
\end{proof}

\begin{example}
Consider $W_6$ with vertex and edge labeling given in Figure \ref{fig:OW6} and its  Laplacian matrix $L$. The Moore-Penrose inverse $L^+$ of $L$ is as follows.

\[L=\left[\begin{array}{r|rrrrr}
5 & -1 & -1 & -1 & -1 & -1 \\
\hline
-1 & 3 & -1 & 0 & 0 & -1 \\
-1 & -1 & 3 & -1 & 0 & 0 \\
-1 & 0 & -1 & 3 & -1 & 0 \\
-1 & 0 & 0 & -1 & 3 & -1 \\
-1 & -1 & 0 & 0 & -1 & 3
\end{array}\right],\;
L^+=\frac{1}{396}\left[\begin{array}{r|rrrrr}
55 & -11 & -11 & -11 & -11 & -11 \\
\hline
-11 & 103 & -5 & -41 & -41 & -5 \\
-11 & -5 & 103 & -5 & -41 & -41 \\
-11 & -41 & -5 & 103 & -5 & -41 \\
-11 & -41 & -41 & -5 & 103 & -5 \\
-11 & -5 & -41 & -41 & -5 & 103
\end{array}\right].\]
\end{example}

\bigskip

\end{document}